\providecommand{\U}[1]{\protect\rule{.1in}{.1in}}
\newtheorem*{untheorem}{Theorem}
\newtheorem*{unremark}{Remark}
\newtheorem*{uncorollary}{Corollary}
\newtheorem*{undefinition}{Definition}
\newtheorem*{EWW}{Every-which-way Theorem}
\newtheorem*{CRT}{Conjugation Rotation Theorem}
\newenvironment{proof}[1][Proof]{\noindent\textbf{#1.} }{\ \rule{0.5em}{0.5em}}
\def\co{\colon\thinspace}
\begin{document}

\title{How efficiently can one untangle a double-twist? Waving is believing!}
\author{David Pengelley
\and Daniel Ramras\thanks{The second author was partially supported by a grant from
the Simons Foundation (\# 279007).}}
\date{}
\maketitle

\begin{abstract}
It has long been known to mathematicians and physicists that while a full
rotation in three-dimensional Euclidean space causes tangling, two rotations
can be untangled. Formally, an untangling is a based nullhomotopy of the
double-twist loop in the special orthogonal group of rotations. We study a
particularly simple, geometrically defined untangling procedure, leading to
new conclusions regarding the minimum possible complexity of untanglings. We
animate and analyze how our untangling operates on frames in 3--space, and
teach readers in a video how to wave the nullhomotopy with their hands.

\end{abstract}

\section{Entanglement, efficiency, and aesthetics}

When an object receives a full rotational turn, it becomes \textquotedblleft
entangled\textquotedblright\ with its environment. This is second nature to
anyone who winds up a garden hose. As more turns are applied, we feel
intuitively that the entanglement magnifies via increasingly wound strings
(visible or invisible) connecting the object to fixed surroundings. However,
outlandish as it may seem, three-dimensional space doesn't actually behave
like this! Mathematicians and physicists have long known that while a single
full turn entangles, two turns can be \textquotedblleft
untangled\textquotedblright. For physicists this provides an analogy for
understanding electron spin \cite[p. 28f]{Feynman}, \cite{FK,kauffman-video},
\cite[Fig. 41.6, p. 1149]{misner}, and for mathematicians it is a statement
about the topology of the rotation group $SO(3)$.

Visual demonstrations of untanglings
\cite{kauffman-video,wikipedia-orientation-entanglement,palais,wikipedia-plate-trick}
often have physical origins, with names like the \textquotedblleft Dirac
scissors demonstration\textquotedblright\ \cite[p. 43]{Penrose},
\textquotedblleft Dirac belt trick\textquotedblright, \textquotedblleft
Feynman plate trick\textquotedblright, or Indonesian or Philippine
\textquotedblleft candle dance\textquotedblright. While these fascinating
demonstrations convince one that untangling a double-twist is possible, they
have an ad hoc feel, and it is hard to see mathematically how they achieve an
untangling. For instance, if you are familiar with the belt trick (discussed
in detail in the next section), one thing you may notice is that however you
perform it, apparently some place on the belt undergoes a 180$%
%TCIMACRO{\U{b0}}%
%BeginExpansion
{{}^\circ}%
%EndExpansion
$ rotation (ignoring any translation in 3-space that may occur) at some time
during the trick. But can one find a way to avoid this, or is it a
mathematical necessity? And are there other constraints that aren't easily
guessed just from watching such demonstrations? We will actually discover much
stronger constraints than this, and prove them all in our every-which-way
theorem of Section~\ref{EWW}.

Shortly we will explain that formally an untangling is a \emph{based
nullhomotopy} of the double-twist loop within the space of rotations of
three-space. Here are some questions we will then address: How efficient can a
based nullhomotopy be? For instance, what portion of the rotations in $SO(3)$
must an untangling utilize, and could each rotation utilized be used just
once, apart from those in the double-twist itself? How much must a based
nullhomotopy move individual vector directions around in three-space? And
finally, what does a maximally efficient based nullhomotopy \emph{look} like?
Our responses to these questions will include illustrative images and
supporting videos and animations. Even though at first you probably cannot
even imagine that it is possible, you should emerge able to \textquotedblleft
wave\textquotedblright\ a most efficient nullhomotopy of the double-twist with
your hand, and easily teach it to your friends.

Physical untangling demonstrations make the above questions difficult to
visualize, partly because the objects involved in the demonstration (Dirac's
belt, the dancer's arm) move in space during the demonstration, obscuring a
focus on the rotations involved. We might prefer to watch the untangling
process play out in a family of movies performed by an object, centered at the
origin, rotating about various axes through the origin. To achieve this, we
will first construct a particular based nullhomotopy directly from rotation
geometry.\footnote{Another based nullhomotopy is depicted in \cite{palais},
but is obtained by splicing three homotopies together, and lacks the
smoothness and efficiency we seek. An \emph{unbased} nullhomotopy is discussed
in \cite[pp. 788--9]{gray}, and two other based nullhomotopies in the
literature will be compared with ours a little later.} We will then utilize
properties of our nullhomotopy to deduce minimum constraints on the complexity
required by any based nullhomotopy, and will find that ours is as efficient as
possible subject to these constraints.

Our techniques involve degree theory, spherical geometry, and covering spaces,
and we emphasize the quaternionic viewpoint on rotations.

While reading, limber up your right hand and arm; we cannot be responsible for
sore muscles and joints. A second independent time coordinate would also be
very useful. If you have one, rev it up; if not, we will stretch your brain to
improvise one. Finally, if you just can't wait for visualization, animations,
and handwaving, then you can skip ahead, after Sections 2 and 3, to Sections 7
and 8.

\section{What does untangling involve?}

We need to make precise what we do and don't mean by \textquotedblleft
untangling.\textquotedblright\ The short answer is that untanglings will be
\emph{based nullhomotopies of loops} in the space of rotations of $3$-space,
but we will first explain the ideas more casually by describing and
illustrating two physical processes that give rise to untanglings.

\subsection{Physical untanglings}

We begin with the Dirac belt trick \cite{delatorre,egan}. Find a belt that is
not too stiff. Hold the buckle in your right hand and the other end in your
left hand, so that the belt runs straight between your hands. Now use your
right hand to twist the buckle through two full turns as you would a
right-handed screw, so that the belt twists about its length. It may appear
that without simply turning the buckle back the other way, you won't be able
to return the belt to its original, untwisted state.

However, it can be done. We will describe one method below, and you may find
your own way. The particular motions of your hands and arms are not important;
nor is the specific location of the belt in space. In fact, your arms are not
technically part of the challenge, so you are allowed to pass the belt
\textquotedblleft through\textquotedblright\ an arm. The key property of an
untangling is that once you're done twisting up the belt, you can perform the
untangling without changing the orientation at either end.

Here is one way to do it: Bring the ends together, pass the buckle around the
part of the belt hanging down from your left hand, and then pull the belt
straight again. During the maneuver, you will find that you momentarily need
to hold the buckle with your left hand, so that your right arm may move to the
other side of the belt (in effect, the belt will pass \textquotedblleft
through\textquotedblright\ your arm). If you pass the buckle in one direction
around the hanging belt, the tangling will increase, but if you pass it in the
other direction, the belt will come untangled. This process interpolates, in
time, between a double-twisted belt and an untwisted one. We demonstrate the
belt trick in the video \texttt{belt-trick.mp4} from the supplementary
materials accompanying this article (also at \cite{animations}).

Another untangling goes by the name of the waiter or Feynman plate trick, or
the Indonesian or Philippine candle dance
\cite{indonesian-candle-dance-youtube}. Extend your right hand straight ahead
of you, with your palm facing upwards (the waiter holds a full plate of food;
the dancer a lighted candle; a novice might choose a book). Now twist your arm
so that your palm, always staying horizontal, turns counterclockwise viewed
from above. After turning about $90^{\circ}$, you'll need to contort a bit to
continue, but by pushing your elbow out to the right you can comfortably swing
your hand beneath your armpit and continue to $270^{\circ}$, at which point
your hand will point out to the right and your arm will be twisted. You can
now bring your arm forwards so that your hand is back in its original
position, but your arm is awkwardly twisted with your elbow pointing upwards.
Maybe at this point you feel you have had enough, but we encourage you to
continue by twisting your arm so that your hand continues its counterclockwise
twisting, palm up. This time, your hand will pass over your head, and a rather
natural motion will return it to its original position, with your arm now
untwisted. We demonstrate the candle dance in the video
\texttt{candle-dance.mp4} from our supplementary materials (also at
\cite{animations}).

During the dance, your palm performs a double-twist while your shoulder
remains stationary, and your arm interpolates between these two motions. The
key property of the interpolation is that every point along your arm returns
to its original orientation at the end of its motion. In contrast to the belt
trick, here the interpolation between a double-twist and no twist is happening
not in time, but with respect to distance along your arm.

\subsection{Frames, loops, and homotopies}

Mathematically speaking, just what do these physical demonstrations tell us?
To tackle this, we will use the language of frames.

A (positive, orthonormal) \emph{frame} is a list of three mutually
perpendicular unit vectors in $\mathbb{R}^{3}$ which, when viewed as the
columns of a $3\times3$ matrix, lie in the group
\[
SO(3)=\{A:AA^{T}=I_{3}\text{ and }\text{det}(A)=1\}.
\]
In particular, the standard basis vectors $e_{1}$, $e_{2}$, and $e_{3}$ form a
frame, the \emph{standard reference frame}, whose matrix is the identity
matrix $I_{3}$.

Now begin again with your belt held straight from left to right, with both
width and length horizontal. Imagine a line drawn along the middle of the belt
from one end to the other, and attach a copy of the standard reference frame
at each point along this line, using the usual convention (right-hand rule)
that the positive $x$-, $y$-, and $z$-axes point (respectively) toward the
viewer, toward the right, and up. We view these frames as \emph{tangential
objects} consisting of vectors attached to the belt, so that any physical
positioning of the belt in $3$-space gives a path of frames, that is, a
continuous map of an interval into $SO(3)$. Notice that we care not about the
translational component of the positioning, but only about the rotational
positioning of each frame.

When you begin the trick with the belt twisted twice about its length, the
frames spin twice about their second vector as you progress along the belt. A
perfectly even double-twist yields the path of frames
\begin{equation}
\gamma(t)=\left[
\begin{array}
[c]{ccc}%
\cos(2t) & 0 & -\sin(2t)\\
0 & 1 & 0\\
\sin(2t) & 0 & \cos(2t)
\end{array}
\right]  , \label{rot}%
\end{equation}
where $t$ varies from $0$ at the left end to $2\pi$ at the buckle.

As you untangle the belt, at each time $s$ in the process the belt's position
gives another path $\gamma_{s}(t)$ inside $SO(3)$. The untangling process does
not change the frames at the ends of the belt: $\gamma_{s}(0)=\gamma_{s}%
(2\pi)=I_{3}$. Thus $\gamma_{s}$ is in fact a \emph{loop} in $SO(3)$, based at
$I_{3}$.

All in all, our untangling gives us a (continuous) \emph{based homotopy} of
loops in $SO(3)$:
\[
H\colon\,[0,1]\times\lbrack0,2\pi]\longrightarrow
SO(3),\,\,\,\,\,\,H(s,t)=\gamma_{s}(t).
\]
Since $\gamma_{0}$ is the double-twist around the $y$-axis and $\gamma_{1}$ is
the constant loop at $I_{3}$, $H$ is a \emph{based nullhomotopy} of the
double-twist. Notice that if we had simply allowed you to \emph{untwist} the
belt by turning the buckle back with an unscrewing motion, the paths
$\gamma_{s}$ would have $\gamma_{s}(0)=I_{3}$, but $\gamma_{s}(2\pi)\neq
I_{3}$.

We view the \emph{fundamental group} $\pi_{1}(SO(3))$ as the group formed by
based homotopy classes of loops of length $2\pi$, based at $I_{3}$. Recall
that the group operation is given by concatenating two loops and then
reparametrizing to form another loop of length $2\pi$. For example, the
single-twist $\tau_{y}$ around the $y$-axis is given by the loop
\[
\tau_{y}(t)=\left[
\begin{array}
[c]{ccc}%
\cos(t) & 0 & -\sin(t)\\
0 & 1 & 0\\
\sin(t) & 0 & \cos(t)
\end{array}
\right]  ,
\]
and $\tau_{y}^{2}$ is the loop in (\ref{rot}). The belt trick, then, provides
us with a physical proof that $\tau_{y}^{2}=1$ in $\pi_{1}(SO(3))$.

There is nothing special about our use of the $y$-axis as the rotation axis
for defining $\tau_{y}$. In fact, there is a continuous path in the unit
sphere $S^{2}$ connecting any two unit vectors, and the family of
single-twists around the vectors (viewed as rotation axes) in such a path
gives a based homotopy between the single-twists around the vectors at the two
ends of the path. Thus we are free, up to based homotopy, to twist about any
axis we like.

How does the candle dance translate into this mathematical formalism? With our
arm extended, ready to begin the dance, we attach the standard reference frame
at every spot along a line running along the arm from palm to shoulder to
neck. At every time during the dance, we obtain a path of frames along the
arm, or equivalently a path in $SO(3)$. But you will notice that these paths
are not loops: the frame at your neck stays fixed at the standard reference
frame, but the frame at your palm twists. On the other hand, fix a position on
your arm, and consider the path of frames that appear at that spot on your arm
as time varies. Since your arm starts and ends in the same position, this
gives a loop, based at the standard reference frame $I_{3}$! At your neck,
this loop is constant at $I_{3}$, while at your palm, this loop is the
double-twist about the vertical axis. Thus again we obtain a based
nullhomotopy of a double-twist, this time about the $z$-axis.

There is an important difference between our mathematical interpretations of
the belt trick and the candle dance. While the twisted belt is untangled in
time, the rotation of the candle is untangled along the arm. Thus in our
mathematical formalism for the belt trick, time represents the homotopy
parameter $s$, whereas for the candle dance, time functions as the loop
parameter $t$.

\subsection{Teasing apart a double-twist}%

%TCIMACRO{\TeXButton{TeX field}{\begin{figure}[tbh]
%\centering\includegraphics[
%width=3.3in
%]
%{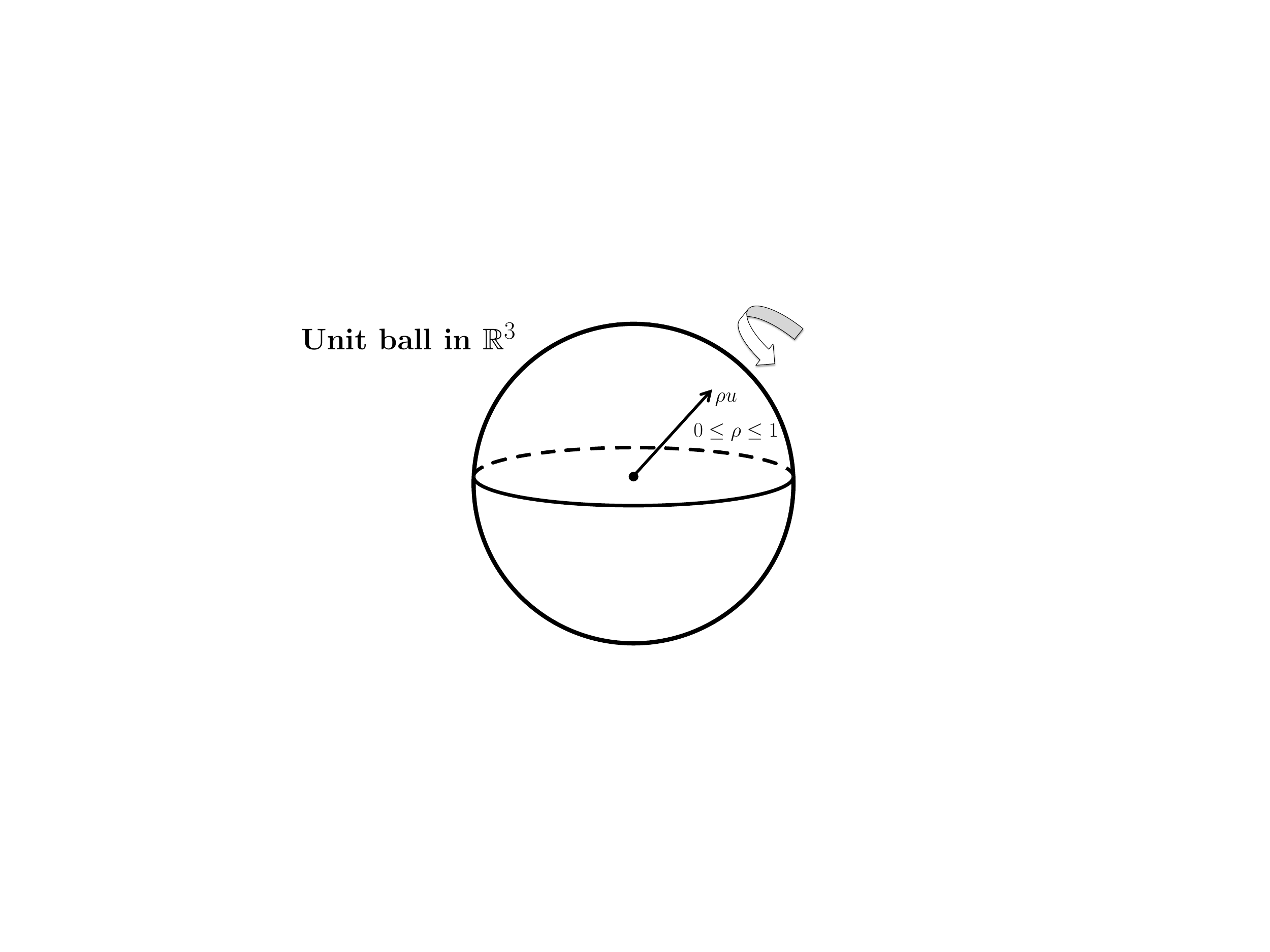}
%\caption{Ball model in $\mathbb{R}^{3}$ for $SO(3)$}
%\label{ball-model-description}
%\end{figure}}}%
%BeginExpansion
\begin{figure}[tbh]
\centering\includegraphics[
width=3.3in
]
{}
\caption{Ball model in $\mathbb{R}^{3}$ for $SO(3)$}
\label{ball-model-description}
\end{figure}%
%EndExpansion

There is a nice model of $SO(3)$ created from the closed unit ball in
$\mathbb{R}^{3}$ (Figure \ref{ball-model-description}). A point in the ball
can be represented by $\rho u$, where $u\in S^{2}$ is a vector, and $\rho
\in\left[  0,1\right]  $ is a scalar. Let $\rho u$ correspond to rotation by
$\rho\pi$ radians about the axis determined by $u$, counterclockwise while
looking towards the origin from the tip of $u$. This correspondence provides a
homeomorphism between the ball and $SO(3)$, provided we identify antipodal
points on the boundary sphere, since rotation by $\pi$ is the same both
clockwise and counterclockwise\footnote{Implicit here and throughout is the
fact that for every matrix in $SO(3)$ the corresponding linear transformation
is a rotation of Euclidean space around some axis through the origin by some
angle. This was first proven by Leonhard Euler
\cite{euler,euler-wikipedia,euler-citizendium}, and can be deduced from the
spectral theorem in linear algebra \cite[Sec. 32]{curtis}.}. This quotient
space is also homeomorphic to real projective $3$-space $\mathbb{RP}^{3}$.%

%TCIMACRO{\FRAME{ftbhFU}{4.0274in}{3.0277in}{0pt}{\Qcb{Nullhomotopy teasing
%apart double-twist diameters in a planar slice of the ball model}%
%}{\Qlb{ball-model-nullhomotopy}}{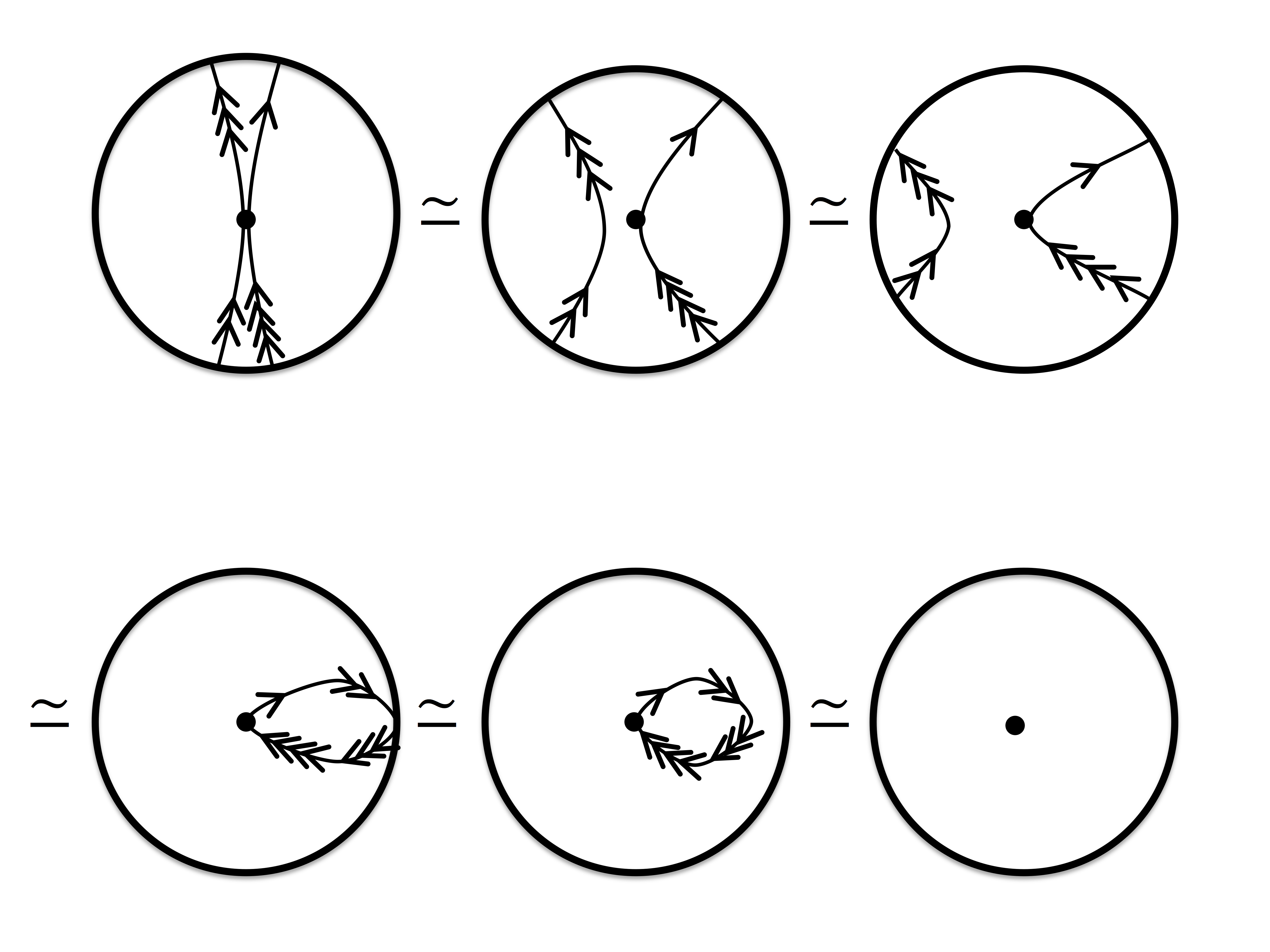}%
%{\special{ language "Scientific Word";  type "GRAPHIC";
%maintain-aspect-ratio TRUE;  display "USEDEF";  valid_file "F";
%width 4.0274in;  height 3.0277in;  depth 0pt;  original-width 9.9998in;
%original-height 7.4996in;  cropleft "0";  croptop "1";  cropright "1";
%cropbottom "0";
%filename 'ball-model-nullhomotopy.png';file-properties "XNPEU";}} }%
%BeginExpansion
\begin{figure}[tbh]%
\centering
\includegraphics[
natheight=7.499600in,
natwidth=9.999800in,
height=3.0277in,
width=4.0274in
]%
{}%
\caption{Nullhomotopy teasing apart double-twist diameters in a planar slice
of the ball model}%
\label{ball-model-nullhomotopy}%
\end{figure}
%EndExpansion

In the ball model, the double-twist around the axis $u\in S^{2}$ is a loop
based at the origin (which corresponds to the identity in $SO(3)$) that
traverses the diameter containing $u$ twice. Figure
\ref{ball-model-nullhomotopy} shows how to separate (tease apart) these two
traversals to create a based nullhomotopy.

Some of our original questions can be answered with the ball model. Clearly
the teasing-apart nullhomotopy lies inside the planar \textquotedblleft disk
model\textquotedblright\ subspace that utilizes only two of the three
coordinate directions for its rotation axes, and this subspace is homeomorphic
to $\mathbb{RP}^{2}$. Moreover, apart from the double-twist diameter itself,
each rotation in this subspace is utilized exactly once by the nullhomotopy.

However, despite these insights, the ball model is not sufficiently related to
rotation geometry to answer our other questions, which involve how the
rotations in a nullhomotopy of the double-twist move individual vectors around
in $3$-space.

\section{Conjuring a nullhomotopy from rotation geometry}%

%TCIMACRO{\FRAME{ftbhFU}{5.2105in}{1.6025in}{0pt}{\Qcb{Tipping the axis upside
%down reverses twist}}{\Qlb{single-twist-tipped}}{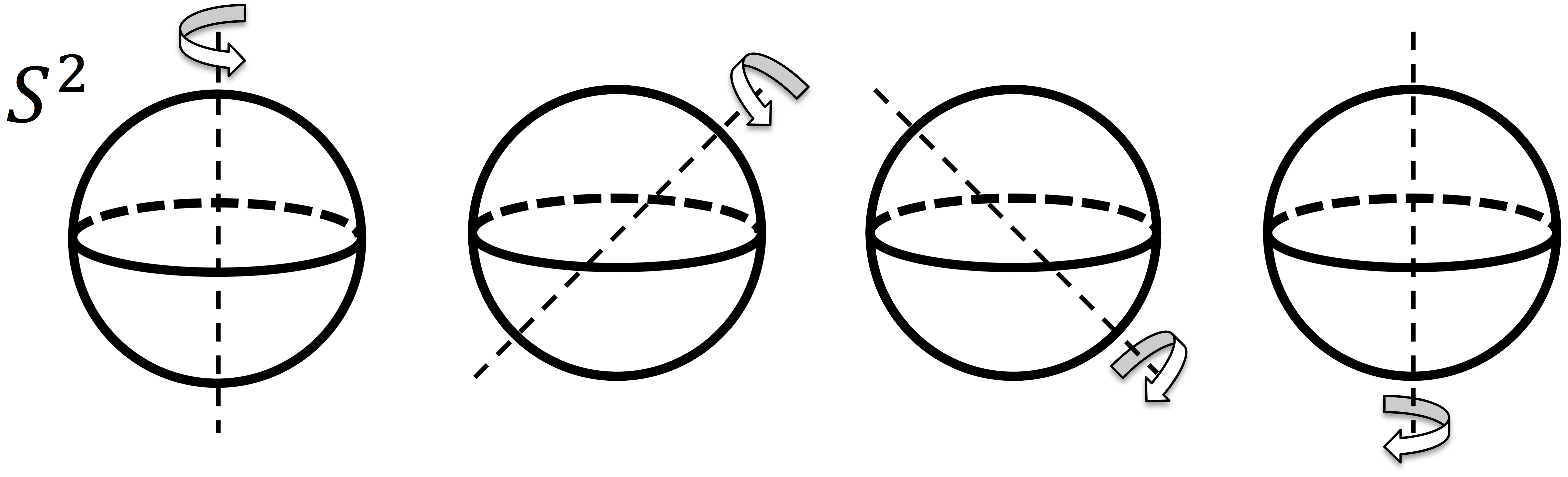}%
%{\special{ language "Scientific Word";  type "GRAPHIC";
%maintain-aspect-ratio TRUE;  display "USEDEF";  valid_file "F";
%width 5.2105in;  height 1.6025in;  depth 0pt;  original-width 9.4221in;
%original-height 2.8634in;  cropleft "0";  croptop "1";  cropright "1";
%cropbottom "0";  filename 'single-twist-tipped.png';file-properties "XNPEU";}}
%}%
%BeginExpansion
\begin{figure}[tbh]%
\centering
\includegraphics[
natheight=2.863400in,
natwidth=9.422100in,
height=1.6025in,
width=5.2105in
]%
{}%
\caption{Tipping the axis upside down reverses twist}%
\label{single-twist-tipped}%
\end{figure}
%EndExpansion

How might we create a mathematically satisfying based nullhomotopy of the
double-twist out of simple rotation geometry? We will combine two ideas.

First, there is a relationship between counterclockwise (ccw) and clockwise
(cw) single-twists as based loops in $SO(3)$. Indeed, consider a ccw
single-twist $\tau_{z}$ around the positive $z$-axis. Now continuously tip the
axis of the twist along a great circle until the axis is upside down (Figure
\ref{single-twist-tipped}). This provides a based homotopy between a ccw
single-twist and a cw single-twist, both around the positive $z$-axis. From
this, the theory of fundamental groups will assure us that a double-twist is
nullhomotopic:
\[
\tau_{z}^{2}=\tau_{z}\tau_{z}\simeq\tau_{z}\tau_{z}^{-1}\simeq\text{constant
loop.}%
\]
While the latter homotopy is a standard construction for fundamental groups,
and pasting the two homotopies together provides an untangling, it is
inefficient and jerky.

A second idea is to replace concatenation in the double-twist by a different
point of view before deforming it. So far, we have been thinking of a
double-twist as the concatenation and reparametrization of two single-twists,
both around the same axis in the same rotation direction. Suppose instead that
for every parameter value $t$ we multiply\ using the group structure of
$SO(3)$, i.e., we compose the two rotations corresponding to $t$. The
resulting path will, at $t$, first rotate by $t$, and then rotate again by $t$
with the same axial ray and rotation direction, for a total rotation of $2t$.
Observe: The result of multiplying the two single-twists pointwise in $t$
(using the group structure of $SO(3)$) is exactly the same as concatenating
and reparametrizing them. Of course it is critical that the two single-twists
are around the same axis.%

%TCIMACRO{\FRAME{ftbhFU}{5.2978in}{1.8386in}{0pt}{\Qcb{Double-tipping combined
%with multiplication in $SO(3)$}}{\Qlb{double-tip}}{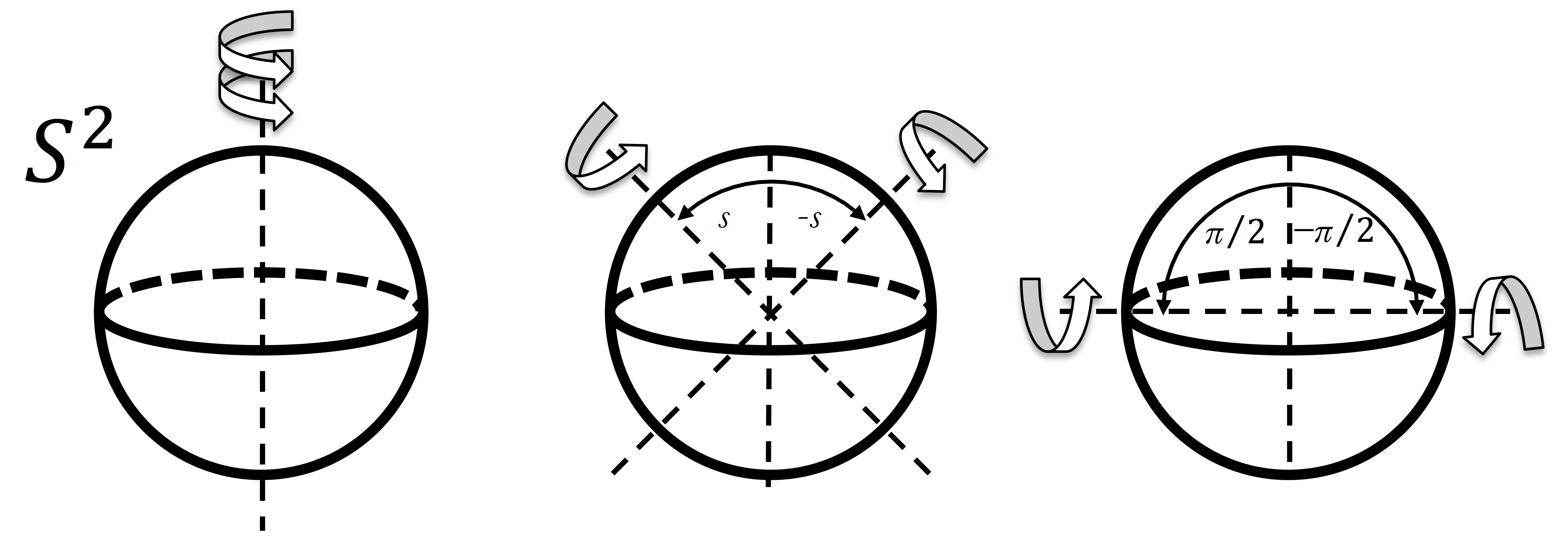}%
%{\special{ language "Scientific Word";  type "GRAPHIC";
%maintain-aspect-ratio TRUE;  display "USEDEF";  valid_file "F";
%width 5.2978in;  height 1.8386in;  depth 0pt;  original-width 8.3956in;
%original-height 2.8833in;  cropleft "0";  croptop "1";  cropright "1";
%cropbottom "0";  filename 'double-tip.png';file-properties "XNPEU";}} }%
%BeginExpansion
\begin{figure}[tbh]%
\centering
\includegraphics[
natheight=2.883300in,
natwidth=8.395600in,
height=1.8386in,
width=5.2978in
]%
{}%
\caption{Double-tipping combined with multiplication in $SO(3)$}%
\label{double-tip}%
\end{figure}
%EndExpansion

We now combine these two ideas to create a homotopy $D$ (Figure
\ref{double-tip}). We begin at homotopy parameter value $s=0$ with the
counterclockwise double-twist around the positive $z$-axis, reinterpreted now
as the pointwise composition product of two ccw single-twists. As $s$
increases, we still use the composition product of two ccw single-twist loops
as our loop (recall that composition of rotations always produces another
rotation, per Euler), but we tip the axes of the two single-twist loops, and
we tip them differently. For a given $s$, we tip the first rotation axis
leftward by $s$ radians (in the plane $x=0$) from the positive $z$-axis toward
the negative $y$-axis, and we tip the second rotation axis rightward by $s$
radians toward the positive $y$-axis. Note that the result is always a based
loop: the paths being multiplied are always based loops, so at $t=0$ or $2\pi$
we are just composing the identity transformation with itself.

To see what our double-tipping homotopy $D$ is achieving, consider what
happens when $s$ reaches $\pi/2$, at which stage we have the composition
product of a ccw single-twist around the negative $y$-axis with a ccw
single-twist around the positive $y$-axis. For each $t$, we are composing two
inverse transformations, so this is the identity transformation for every $t$,
and thus we have the constant loop based at the identity. So we stop at
$s=\pi/2$ and declare victory.

Henceforth all nullhomotopies will be parametrized with $s$ ranging from $0$
to $\pi/2$, rather than to $1$, for easy comparison with our geometrically
defined nullhomotopy $D$.

\section{Quaternions describe the nullhomotopy}

The quaternions are astonishingly useful for describing rotations in 3-space
\cite[Ch. 8]{gallier}\cite{conway-smith,hanson,quaternions-spatial-rotation}.
We give just the bare essentials needed for our purposes.

As a vector space the \emph{quaternions} are just $\mathbb{R}^{4}$, with
elements $\left(  r,x,y,z\right)  $ written quaternionically as $r+xI+yJ+zK$.
The \emph{real part }is $r$, the \emph{imaginary part} is $xI+yJ+zK$, and the
\emph{conjugate} is $r-xI-yJ-zK$. The \emph{unit quaternions} are those with
Euclidean norm one, the \emph{pure quaternions} are those with zero real part,
and \emph{real quaternions }are those with zero imaginary part. The
quaternions have an associative, noncommutative multiplication in which the
real quaternions are central. This multiplication distributes over addition,
and is determined by the formulas%
\[
I^{2}=J^{2}=K^{2}=IJK=-1,
\]
from which one obtains
\[
IJ=K,\ \ JK=I,\ \ KI=J,\ \ JI=-IJ,\ \ KJ=-JK,\ \ IK=-KI.
\]
A quaternion times its conjugate yields the square of its norm, so quaternions
have multiplicative inverses, and the inverse of a unit quaternion is its
conjugate. The norm of a product is the product of the norms, so the unit
quaternions, which constitute the unit sphere $S^{3}$, are a group under multiplication.

The quaternions act linearly on themselves as follows: $qvq^{-1}$ is called
the conjugation action of $q$ on $v$. Since real quaternions commute with all
quaternions, scaling $q$ by a real quaternion leaves the conjugation
unchanged, so the action on $v$ is captured entirely by unit quaternions $q$,
and henceforth we consider only action by these. Even then, $q$ and its
antipodal point $-q$ still produce the same action on $v$. For unit
quaternions $q$, the conjugation formula $qvq^{-1}$ becomes $qv\overline{q}$,
where $\overline{q}$ denotes the conjugate of $q$.

An important, easily verified feature is that the pure quaternions are
preserved under the conjugation action. We shall focus on these, since they
are in clear correspondence with $\mathbb{R}^{3}$, with $I,J,K$ corresponding
to the standard unit vectors along the $x$--, $y$--, and $z$-- coordinate axes
(respectively). So each $\pm q$ in the unit quaternions $S^{3}$ produces, via
conjugation on the pure quaternions, a linear transformation of $\mathbb{R}%
^{3}$. What linear transformations are produced? If you are guessing that it
is the rotations, you are correct, and the details seem like magic.

Consider a unit quaternion $q$. Since it has norm one, it can be written
(almost) uniquely in the form $\cos\left(  \gamma/2\right)  +\sin\left(
\gamma/2\right)  u$, where $0\leq\gamma\leq2\pi$ and $u$ is a pure unit
quaternion, i.e., $u$ is a vector in $S^{2}$. The only caveat to uniqueness is
when $\sin\left(  \gamma/2\right)  =0$, which causes $u$ to be irrelevant.
This happens only when $\gamma=0$ or $2\pi$, i.e., when $q=\pm1$, in which
cases the conjugation action is the identity transformation. Now the following
amazing fact tells us exactly how conjugations produce rotations.

\begin{CRT}
Given a pure unit quaternion $u$, conjugation by
\[
\cos\left(  \gamma/2\right)  +\sin\left(  \gamma/2\right)  u
\]
is the linear transformation that rotates $\mathbb{R}^{3}$ around $u$, by
angle $\gamma$ counterclockwise (looking towards the origin from the tip of
$u$).
\end{CRT}

\noindent You may enjoy confirming this yourself.

\begin{unremark}
$\cos\left(  \gamma/2\right)  +\sin\left(  \gamma/2\right)  u$ is itself a
pure unit quaternion precisely when the angle of rotation under conjugation is
$\pi$ (that is, when $\gamma=\pi$).
\end{unremark}

We mentioned above that conjugation by $q$ and $-q$ produce the same
transformation, which can be effected in the correspondence of the theorem by
negating $u$ and changing the angle $\gamma$ to $2\pi-\gamma$, yielding the
same rotation described from the other end of the axis. Note too that the
quaternionic representation has $4\pi$--periodicity in $\gamma$, with
alteration by $2\pi$ effecting negation. It is clear from the formula that
negation is the only way that two different unit quaternions can produce the
same rotation. Thus conjugation creates a smooth two-to-one covering map from
the unit quaternions $S^{3}$ onto the space of rotations, i.e., onto $SO(3)$.
Let us denote this map sending $q$ to \textquotedblleft conjugation by
$q$\textquotedblright\ by $\mathcal{R}%
%TCIMACRO{\TeXButton{TeX field}{\co}}%
%BeginExpansion
\co
%EndExpansion
S^{3}\rightarrow$ $SO(3)$.

One more superb feature of the double-covering $\mathcal{R}$ will enable us to
\textquotedblleft quaternionically compute\textquotedblright\ our
geometrically defined double-tipping nullhomotopy $D$. The homotopy was
defined using composition of rotations, i.e., the group structure for linear
transformations, corresponding to matrix multiplication in $SO(3)$. But recall
that the unit quaternions $S^{3}$ form a group, under quaternion
multiplication. Is the map $\mathcal{R}%
%TCIMACRO{\TeXButton{TeX field}{\co}}%
%BeginExpansion
\co
%EndExpansion
S^{3}\rightarrow$ $SO(3)$ a group homomorphism? Yes, as seen from%
\begin{align}
\mathcal{R}\left(  q_{1}q_{2}\right)  (v)  &  =\left(  q_{1}q_{2}\right)
v\left(  q_{1}q_{2}\right)  ^{-1}=q_{1}q_{2}vq_{2}^{-1}q_{1}^{-1}\nonumber\\
&  =\mathcal{R}\left(  q_{1}\right)  \left(  \mathcal{R}\left(  q_{2}\right)
\left(  v\right)  \right)  =\left(  \mathcal{R}\left(  q_{1}\right)
\cdot\mathcal{R}\left(  q_{2}\right)  \right)  \left(  v\right)
\label{homomorphism}%
\end{align}

We need to compute the axes of rotation used in the double-tipping
nullhomotopy $D$. When we tip the unit vector $K$ on the positive $z$-axis by
$s$ radians toward the negative $y$-axis (whose unit vector is $-J$), this
yields the unit vector $-J\sin s+K\cos s$, and similarly yields $J\sin s+K\cos
s$ when we tip rightward toward the positive $y$-axis. Our definition applies
the leftward tipped rotation first, so according to the theorem and
(\ref{homomorphism}) we want the product
\begin{align*}
\widehat{D}\left(  s,t\right)  =  &  \left[  \cos\left(  t/2\right)
+\sin\left(  t/2\right)  \left(  J\sin s+K\cos s\right)  \right]  \cdot\\
&  \quad\quad\quad\quad\quad\quad\quad\quad\left[  \cos\left(  t/2\right)
+\sin\left(  t/2\right)  \left(  -J\sin s+K\cos s\right)  \right] \\
=  &  \left(  1-2\cos^{2}s\sin^{2}\left(  t/2\right)  \right)  +I\left(
\sin\left(  2s\right)  \sin^{2}\left(  t/2\right)  \right)  +K\left(  \cos
s\sin t\right)  ,
\end{align*}
a homotopy in the unit quaternions.

We may conclude that our double-tipping nullhomotopy $D\left(  s,t\right)  $
is conjugation by $\widehat{D}\left(  s,t\right)  $, i.e., $D\left(
s,t\right)  =\mathcal{R}\left(  \widehat{D}\left(  s,t\right)  \right)  $.

\begin{untheorem}
The mapping $D\left(  s,t\right)  $ into $SO(3),$ defined for $0\leq s\leq
\pi/2$ and $0\leq t\leq2\pi$ as conjugation on pure quaternions $xI+yJ+zK$ by
the unit quaternion
\begin{equation}
\widehat{D}\left(  s,t\right)  =\left(  1-2\cos^{2}s\sin^{2}\left(
t/2\right)  \right)  +I\left(  \sin\left(  2s\right)  \sin^{2}\left(
t/2\right)  \right)  +K\left(  \cos s\sin t\right)  ,
\label{quaternion-formula}%
\end{equation}
is a based nullhomotopy of the double-twist around $K$.\footnote{An equivalent
formula (after permuting coordinates) appears in \cite[pp. 121--2]{hanson},
but without indicating its geometric origins.}
\end{untheorem}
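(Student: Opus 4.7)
The plan is to verify that the explicit formula (\ref{quaternion-formula}) for $\widehat{D}(s,t)$ is indeed the quaternion product derived immediately before the theorem, and then to read off from that product form the four properties required of a based nullhomotopy of the double-twist around $K$: unit-quaternion-valuedness (so that $\mathcal{R}(\widehat{D}(s,t))$ is well-defined as a rotation), continuity, basedness at $t=0$ and $t=2\pi$, and the correct initial and terminal loops at $s=0$ and $s=\pi/2$.

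First I would expand the product
\[
\bigl[\cos(t/2)+\sin(t/2)(J\sin s+K\cos s)\bigr]\cdot\bigl[\cos(t/2)+\sin(t/2)(-J\sin s+K\cos s)\bigr]
\]
by bilinearity, keeping careful track of the noncommutativity of $J$ and $K$. The two $\cos(t/2)\sin(t/2)$ cross terms yield $2\cos(t/2)\sin(t/2)\cos s\,K = \cos s\sin t\,K$, since the $\pm J\sin s$ contributions cancel. The $\sin^2(t/2)$ term $(J\sin s+K\cos s)(-J\sin s+K\cos s)$ expands via $J^2=K^2=-1$ and $JK=-KJ=I$ to $-\cos(2s)+I\sin(2s)$. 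Combining this with the leading $\cos^2(t/2)$ and applying $\cos(2s)=2\cos^2 s-1$ together with $\cos^2(t/2)+\sin^2(t/2)=1$ produces the real part $1-2\cos^2 s\sin^2(t/2)$, reproducing (\ref{quaternion-formula}).

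Unit-quaternion-valuedness is then automatic: each factor in the product above is of the form $\cos(t/2)+\sin(t/2)u$ for a pure unit quaternion $u$, hence lies in $S^3$, and $S^3$ is closed under quaternion multiplication. Continuity of $D=\mathcal{R}\circ\widehat{D}$ is immediate from continuity of $\widehat{D}$ and $\mathcal{R}$. For basedness, note that at $t=0$ and $t=2\pi$ both $\sin(t/2)$ and $\sin t$ vanish, giving $\widehat{D}(s,0)=\widehat{D}(s,2\pi)=1$ and hence $D(s,0)=D(s,2\pi)=I_3$. At $s=0$, (\ref{quaternion-formula}) collapses to $(1-2\sin^2(t/2))+K\sin t=\cos t+K\sin t$, which by the Conjugation Rotation Theorem is rotation by angle $2t$ about $K$, i.e., the double-twist around $K$. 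At $s=\pi/2$, $\cos s=0$ and $\sin(2s)=0$, leaving $\widehat{D}(\pi/2,t)=1$, the constant loop at $I_3$.

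The main (and rather mild) obstacle is the first step: one must respect the order of the two factors, dictated by the homomorphism property (\ref{homomorphism}) and the convention that the leftward-tilted rotation is applied first, and keep careful track of the noncommuting $J$, $K$ products when expanding the $\sin^2(t/2)$ term. What makes this step clean, and in fact the whole construction elegant, is that the equal-and-opposite tilts chosen in the definition of $D$ force the $J$-components of the cross terms to cancel, so that $\widehat{D}(s,t)$ lies entirely in the $\{1,I,K\}$-subspace and yields the especially simple three-term expression (\ref{quaternion-formula}).
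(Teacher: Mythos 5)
Your proposal is correct and follows essentially the same route as the paper: the paper derives formula \eqref{quaternion-formula} by carrying out exactly the quaternion product you expand, and it justifies the based-nullhomotopy properties in the surrounding discussion (at $s=\pi/2$ one composes inverse transformations, at $s=0$ pointwise multiplication of same-axis twists recovers concatenation, and at $t=0,2\pi$ one composes the identity with itself). You simply re-derive the same boundary conditions directly from the displayed formula rather than from the geometric definition of $D$, which is a clean, equivalent verification.
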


Before proceeding, we note that to compute $D\left(  s,t\right)  $ as a matrix
by carrying out the conjugation requires calculating 27 types of terms, each
involving up to triple products of $I,J,K$, then collecting and simplifying,
and produces entries in the 3 $\times$ 3 matrix that are sixth degree
polynomials in sines and cosines of double, single, and half angles of $s$ and
$t$!

\section{Just how \emph{does} our nullhomotopy untangle the double-twist?}

The most striking feature of our formula for $\widehat{D}$ is that it has no
$J$ component. So $\widehat{D}$ lands in the $r$-$x$-$z$ $2$-sphere, which we
call $S_{y=0}^{2}$; equivalently, $D$ uses only rotation axes lying in the
$x$-$z$ plane.\footnote{The nullhomotopy of the double-twist described in
\cite{FK,HFK}, despite the implication of the quaternion image picture in
\cite[Fig. 6]{HFK} that only two axial coordinate components are needed,
actually utilizes all three spatial coordinates for its axes, as seen in the
explicit coordinate formulas preceding Theorem 2 of \cite{FK}.
\par
Theirs, which we will call the FK-nullhomotopy after the authors, is, like
ours, derived by pointwise loop multiplication in $SO(3)$. Their nullhomotopy
multiplies the \textquotedblleft tip upside-down\textquotedblright\ homotopy
we described earlier, between ccw and cw single-twists, pointwise by a
single-twist in one chosen direction, which \textquotedblleft
shifts\textquotedblright\ the tipping homotopy to obtain a homotopy between a
double-twist and the identity. But the asymmetry of this shifting construction
produces a different result than our more symmetric double-tipping
construction.
\par
The precise relationship between the two nullhomotopies is as follows. Up to
permuting and negating some coordinates for the rotation axes, the
FK-nullhomotopy coordinate formulas can be obtained from ours by allowing our
$K$ axial component to spill into the $J$ direction, by rotating the $K$
component by $s$ radians toward $J$ during slice $s$ of the homotopy. So
instead of $K\left(  \cos s\sin t\right)  $ in the quaternionic homotopy
formula $\widehat{D}$, one would have the billowing rotation axis given by
$\left(  \sin s\right)  J\left(  \cos s\sin t\right)  +\left(  \cos s\right)
K\left(  \cos s\sin t\right)  $, along with an unchanged $I$ component.
\par
Thus, while the FK-nullhomotopy is based, the axes of each individual loop do
not limit at the ends of the loop to the vertical axis of the original twists,
whereas ours do (see below). This is one indication why the visualization and
waving of our homotopy will be easy to learn.} While this can be proven from
the double-tipping definition of $D$ via spherical geometry or linear algebra,
the quaternionic formula tells us immediately.

\begin{undefinition}
Let $P$ be the subspace of $SO(3)$ whose rotation axes lie in the $x$-$z$
plane. From the ball model we know that $P$ is diffeomorphic to the projective
plane $\mathbb{RP}^{2}$.
\end{undefinition}

%

%TCIMACRO{\FRAME{ftbhFU}{3.2283in}{2.4275in}{0pt}{\Qcb{Coordinates $\varphi$
%and $\theta$ for the subspace $P$}}{\Qlb{phi-theta-defs}}{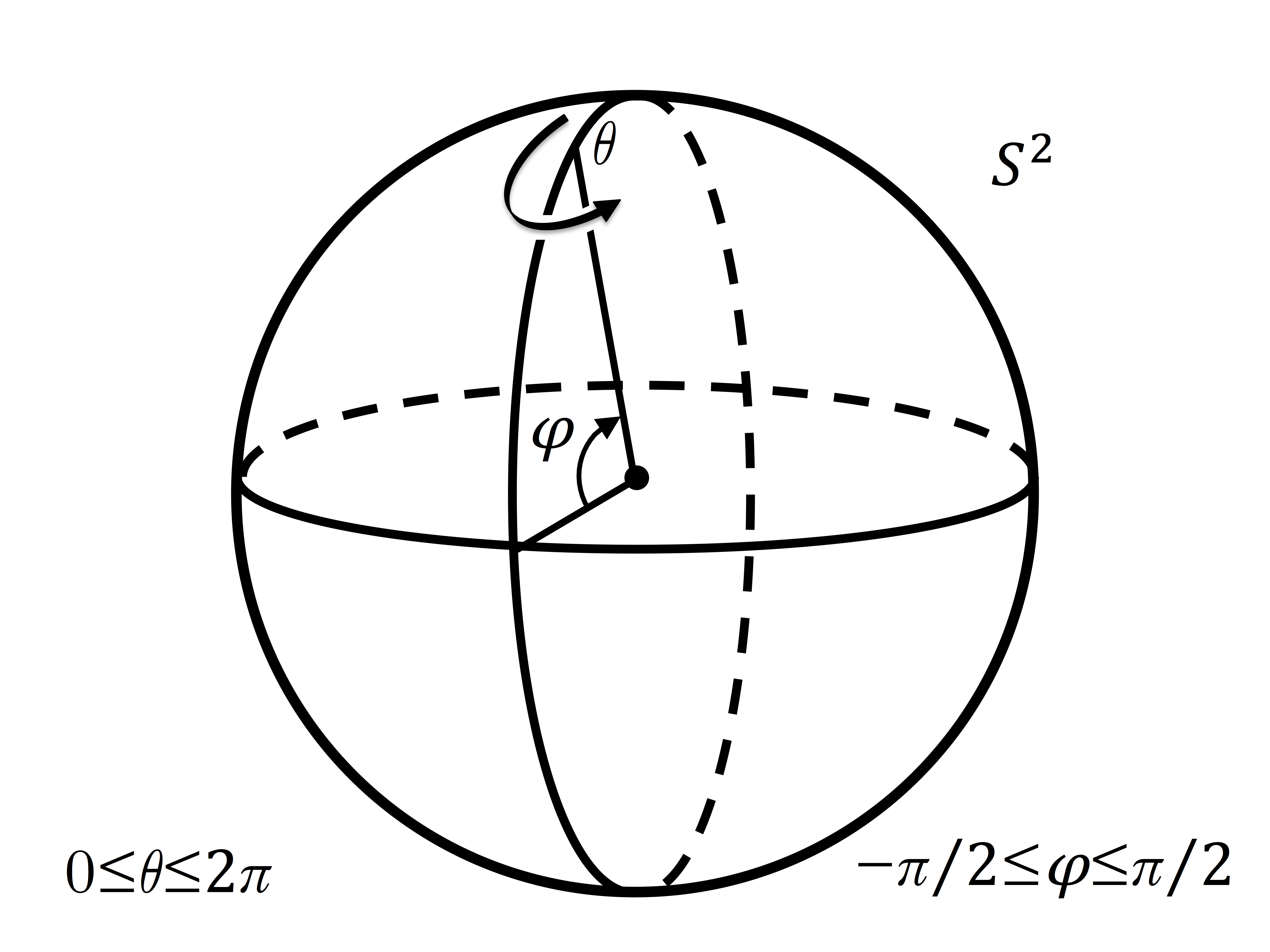}%
%{\special{ language "Scientific Word";  type "GRAPHIC";
%maintain-aspect-ratio TRUE;  display "USEDEF";  valid_file "F";
%width 3.2283in;  height 2.4275in;  depth 0pt;  original-width 9.9998in;
%original-height 7.4996in;  cropleft "0";  croptop "1";  cropright "1";
%cropbottom "0";  filename 'phi-theta-defs.png';file-properties "XNPEU";}} }%
%BeginExpansion
\begin{figure}[tbh]%
\centering
\includegraphics[
natheight=7.499600in,
natwidth=9.999800in,
height=2.4275in,
width=3.2283in
]%
{}%
\caption{Coordinates $\varphi$ and $\theta$ for the subspace $P$}%
\label{phi-theta-defs}%
\end{figure}
%EndExpansion

We are interested in whether $D$, mapping into $P$, looks like the tease-apart
depiction in the ball model of Figure \ref{ball-model-nullhomotopy}. In
particular, does $D$ map onto $P$, and to what extent is it one-to-one, i.e.,
how efficiently does it use its rotations?

\subsection{Axial and rotational coordinates for the nullhomotopy}%

%TCIMACRO{\TeXButton{TeX field}{\begin{figure}[htb] \centering\begin{tabular}
%[c]{cc}
%{\parbox[b]{2.2546in}{\begin{center}
%\includegraphics[
%height=1.55in
%]
%{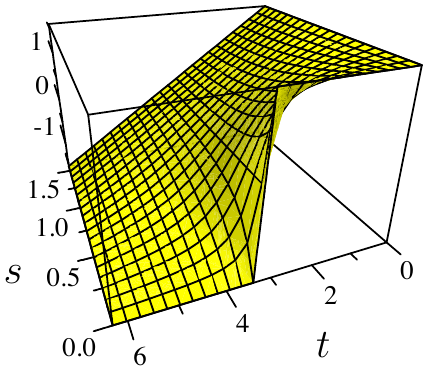}
%\\
%Axial angle $\varphi$ for $\protect\widehat{D}$
%\end{center}}}
%&
%{\parbox[b]{2.3705in}{\begin{center}
%\includegraphics[
%height=1.55in
%]
%{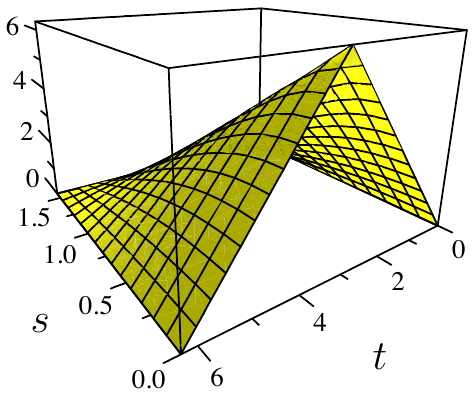}
%\\
%Rotation angle $\theta$ for $\protect\widehat{D}$
%\end{center}}}
%\end{tabular}
%\caption{Axial and rotational angles for $\widehat{D}$}\label
%{phi-theta-graphs}
%\end{figure}}}%
%BeginExpansion
\begin{figure}[htb]

 \centering\begin{tabular}
[c]{cc}
{\parbox[b]{2.2546in}{\begin{center}
\includegraphics[
height=1.55in
]
{axial-angle-graph.pdf}
\\
Axial angle $\varphi$ for $\protect\widehat{D}$
\end{center}}}
&
{\parbox[b]{2.3705in}{\begin{center}
\includegraphics[
height=1.55in
]
{rotation-angle-graph.pdf}
\\
Rotation angle $\theta$ for $\protect\widehat{D}$
\end{center}}}
\end{tabular}
\caption{Axial and rotational angles for $\widehat{D}$}\label
{phi-theta-graphs}
\end{figure}%
%EndExpansion

We can get a sense for $D$ in terms of rotations by looking at graphs of its
rotation axes and angles, which serve as coordinates for $P$, aided by the
fact that $\widehat{D}$ uses only axial directions with nonnegative
$x$-coordinate (cf. the formula for $\widehat{D}$ in \ref{quaternion-formula}%
). Indeed, if we let $\varphi$ be the central angle in the $x$-$z$ plane
measured from the positive $x$-axis to the axis of rotation (Figure
\ref{phi-theta-defs}), then the formula for $\widehat{D}$ and the conjugation
rotation theorem yield
\[
\tan\varphi=\left(  \cos s\sin t\right)  /\left(  \sin\left(  2s\right)
\sin^{2}\left(  t/2\right)  \right)  =\csc s\cot\left(  t/2\right)  .
\]
And similarly, the ccw rotation angle $\theta$ around this axis is
\[
\theta=2\arccos\left(  1-2\cos^{2}s\sin^{2}\left(  t/2\right)  \right)  .
\]

Figure \ref{phi-theta-graphs} shows graphs for $\varphi$ and $\theta$ on the
domain rectangle of the homotopy $D$. Many features of $D$ can be understood
by studying these graphs. The reader may find it interesting to analyze why
one should conjecture therefrom that $D$ maps onto $P$, and also that $D$ is
one-to-one except for the requirements of a based nullhomotopy along the edges
of the domain rectangle. Additionally, one can consider how these graphs
achieve a based nullhomotopy of the double-twist. Any concern of a
discontinuity or lack of smoothness in the middle of the double-twist can be
assuaged by remembering that in $SO(3)$ the rotation axis is not well-defined
at the identity, and that close to the identity, all rotation axes appear with
small rotation amounts.

\subsection{The nullhomotopy captures a projective
plane\label{projective plane}}

The discussion immediately above suggests that the image of $D$ is all of $P$,
and indeed this is true for any based nullhomotopy landing in $P$.

\begin{untheorem}
Any based nullhomotopy of the double-twist landing in $P$ must map onto $P$.
\end{untheorem}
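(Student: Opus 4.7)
The plan is to lift the nullhomotopy to the quaternionic double cover $\mathcal{R}|_{S^2_{y=0}} : S^2_{y=0} \to P$, where $S^2_{y=0} = \{r + xI + zK : r^2 + x^2 + z^2 = 1\}$ consists of unit quaternions with no $J$-component; this restriction is the standard double cover $S^2 \to \mathbb{RP}^2$. Collapsing the three constant edges of the rectangular domain of $H$ to a single point turns the domain into a disk $D^2$ whose boundary traces the double-twist loop $\gamma$. Since $D^2$ is simply connected, $H$ lifts (after fixing a basepoint) to a map $\tilde H : D^2 \to S^2_{y=0}$. By the conjugation rotation theorem, the lift of $\gamma(t) = $ rotation by $2t$ about $K$ is $\cos t + K \sin t$, so $\tilde H|_{\partial D^2}$ traces the great circle $C = S^2_{y=0} \cap \{x = 0\}$ exactly once.

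Next, suppose for contradiction that some $q \in P$ lies outside the image of $H$. Because $\gamma$ itself lies in the image of $H$, the point $q$ is not on $\gamma$, so both preimages $\pm p \in S^2_{y=0}$ avoid $C = \mathcal{R}^{-1}(\gamma)$. Then $\tilde H$ factors through the twice-punctured sphere $S^2_{y=0} \setminus \{p, -p\}$, which deformation retracts onto a great circle and so has $\pi_1 \cong \mathbb{Z}$. Since $p$ and $-p$ lie in opposite hemispheres cut out by $C$, the loop $C$ represents a generator $\pm 1$ of this $\pi_1$. But $\tilde H$ would exhibit $C$ as nullhomotopic there, a contradiction; hence $H$ must surject onto $P$.

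The one delicate step is confirming that $C$ truly represents a generator, rather than $0$ or a higher multiple, in $\pi_1(S^2_{y=0} \setminus \{p, -p\})$. This is handled by rotating coordinates on $S^2_{y=0}$ so that $\pm p$ become the poles: the azimuthal projection onto the equator sends any great circle missing the poles to a loop of winding number $\pm 1$, because traversing $C$ once causes the azimuth to wind around exactly once. Apart from this local computation, everything else is a routine application of covering-space theory once the correct lift and its boundary behavior have been identified.
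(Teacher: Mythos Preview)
Your proof is correct, but the paper's argument is considerably shorter and avoids the covering space entirely. The paper simply observes that if a point $q \in P$ were missed, then $H$ would land in $P \setminus \{q\} \approx \mathbb{RP}^{2} \setminus \{\text{pt}\}$, which is a M\"obius strip, hence homotopy equivalent to $S^{1}$ with the single-twist as generator of $\pi_{1} \cong \mathbb{Z}$; the double-twist then represents $\pm 2 \neq 0$, contradicting the nullhomotopy. Your route reaches the same contradiction upstairs: lifting to $S^{2}_{y=0}$ turns the double-twist into a great circle traversed once and the missing point into a pair of antipodal punctures, and you then check that this great circle generates $\pi_{1}$ of the twice-punctured sphere. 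The two verifications---that the single-twist generates $\pi_{1}$ of the M\"obius strip, versus that a great circle separating two antipodal punctures generates $\pi_{1}$ of $S^{2}\setminus\{\pm p\}$---are dual to one another under the covering map $\mathcal{R}$, so neither approach gains anything the other lacks; the paper's version is just more direct because the M\"obius-strip fact needs no separate ``delicate step.''
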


\begin{proof}
If not, then it would map into $P\approx\mathbb{RP}^{2}$, less a point, which
is homeomorphic to a M\"{o}bius strip, hence homotopy equivalent to a circle,
with the single-twist as generator of its fundamental group. But the
double-twist is not nullhomotopic in $P$ less the point, since $\pi_{1}\left(
S^{1}\right)  \cong\mathbb{Z}$.
\end{proof}

We have also conjectured from Figure \ref{phi-theta-graphs} that $D$ is
one-to-one, except for the identifications required of a based nullhomotopy
along the boundary of the rectangle. If true, we can then say that $D$ is
maximally efficient in its utilization of $SO\left(  3\right)  $.

\begin{untheorem}
$D$ is one-to-one except as prescribed along the edges of the homotopy rectangle.
\end{untheorem}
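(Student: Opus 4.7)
The plan is to lift the question to the universal cover $S^{3}$ via the quaternion formula $\widehat{D}$. Since $\mathcal{R}\co S^{3}\to SO(3)$ identifies exactly antipodal pairs, $D(s_{1},t_{1})=D(s_{2},t_{2})$ if and only if $\widehat{D}(s_{1},t_{1}) = \epsilon\,\widehat{D}(s_{2},t_{2})$ for some $\epsilon \in \{\pm 1\}$. The key observation driving the argument is that the $I$-coefficient of $\widehat{D}$, namely $\sin(2s)\sin^{2}(t/2)$, is non-negative on the homotopy rectangle $[0,\pi/2]\times[0,2\pi]$ and vanishes exactly on its boundary.

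First, I would dispatch $\epsilon=-1$ almost immediately: matching $I$-coefficients forces two non-negative quantities to be negatives of one another, so both vanish and both arguments lie on the boundary of the rectangle. Likewise, if $\epsilon=+1$ and one argument is interior, strict positivity of the $I$-coefficient there forces the other to be interior too, ruling out any interior-to-boundary collision. The crux is then to show that if $\widehat{D}(s_{1},t_{1}) = \widehat{D}(s_{2},t_{2})$ with both arguments interior, they must coincide. On the interior $\cos s_{i}$, $\sin s_{i}$, and $\sin(t_{i}/2)$ are all strictly positive, so taking positive square roots in the real-part equation yields $\cos s_{1}\sin(t_{1}/2)=\cos s_{2}\sin(t_{2}/2)$, and the $I$-coefficient equation then reduces to $\sin s_{1}\sin(t_{1}/2)=\sin s_{2}\sin(t_{2}/2)$. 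Together these force $\tan s_{1}=\tan s_{2}$, hence $s_{1}=s_{2}$, and then $\sin(t_{1}/2)=\sin(t_{2}/2)$. The latter leaves only $t_{1}=t_{2}$ or $t_{1}=2\pi-t_{2}$, and the $K$-coefficient equation $\cos s_{1}\sin t_{1}=\cos s_{2}\sin t_{2}$ eliminates the second option except in the harmless case $t_{1}=t_{2}=\pi$.

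What remains is to verify that the boundary-to-boundary collisions are exactly those prescribed by a based nullhomotopy of the double-twist. The edges $s=\pi/2$, $t=0$, and $t=2\pi$ each map under $\widehat{D}$ to the identity quaternion $1$, while $\widehat{D}(0,t)=\cos t+K\sin t$ traces the $I=0$ great circle injectively on $[0,2\pi)$; the relation $\widehat{D}(0,t+\pi)=-\widehat{D}(0,t)$ then produces, via $\mathcal{R}$, the expected $2$-to-$1$ folding of the double-twist loop together with the collapse of $(0,\pi)$ to $I_{3}$. A brief case-check, again guided by the sign of the $I$-coefficient, confirms that no further identifications arise. The substantive step is the trigonometric extraction of $s_{1}=s_{2}$ and $t_{1}=t_{2}$ in the interior case; the boundary bookkeeping is routine, so this is where I expect the main work to lie.
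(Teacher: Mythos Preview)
Your proposal is correct and follows precisely the approach the paper sketches: lift through $\widehat{D}$, use that $\mathcal{R}$ identifies only antipodes, and exploit the non-negativity of the $I$-component of $\widehat{D}$ on the rectangle. The paper explicitly leaves the trigonometric details to the reader, and what you have written is exactly a correct execution of those details.
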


\begin{proof}
This can be proven from the geometric double-tipping definition of $D$ via
spherical geometry, but is more easily verified from the quaternionic formula
for $\widehat{D}$ on its rectangular domain $0\leq s\leq\pi/2$ and $0\leq
t\leq2\pi$, keeping in mind that $D=\mathcal{R}\circ\widehat{D}$, with
$\mathcal{R}$ identifying only negatives with each other, and noting that the
$I$ component of $\widehat{D}$ is never negative. The details are
straightforward using a few trigonometric identities, so we leave them to the reader.
\end{proof}

%

%TCIMACRO{\FRAME{ftbhFU}{4.0119in}{1.2021in}{0pt}{\Qcb{Any based nullhomotopy
%factors through $Q\approx\QTR{Bbb}{RP}^{2}$}}{\Qlb{rectangle-quotient}%
%}{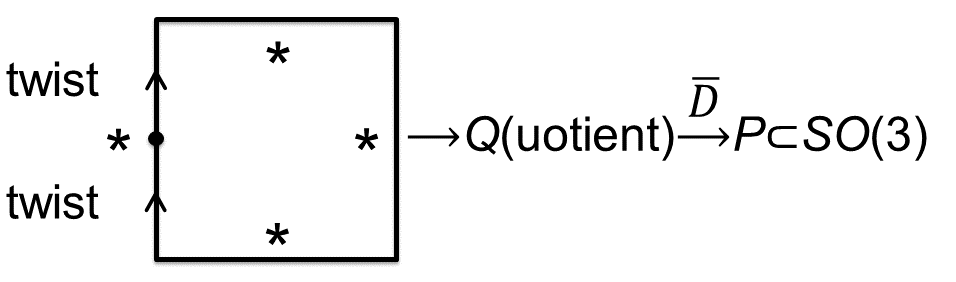}{\special{ language "Scientific Word";
%type "GRAPHIC";  maintain-aspect-ratio TRUE;  display "USEDEF";
%valid_file "F";  width 4.0119in;  height 1.2021in;  depth 0pt;
%original-width 9.9583in;  original-height 2.9378in;  cropleft "0";
%croptop "1";  cropright "1";  cropbottom "0";
%filename 'rectangle-quotient.png';file-properties "XNPEU";}} }%
%BeginExpansion
\begin{figure}[tbh]%
\centering
\includegraphics[
natheight=2.937800in,
natwidth=9.958300in,
height=1.2021in,
width=4.0119in
]%
{}%
\caption{Any based nullhomotopy factors through $Q\approx\mathbb{RP}^{2}$}%
\label{rectangle-quotient}%
\end{figure}
%EndExpansion

We can now topologically combine the surjective and largely one-to-one
features of $D$. A based nullhomotopy of a double-twist must map one edge of
the rectangle twice around the single-twist in $SO(3)$, and the other three
edges to the identity transformation (Figure \ref{rectangle-quotient}). This
is equivalent to saying that the map factors through the quotient space $Q$ of
the rectangle formed by collapsing three edges to a point and wrapping the
remaining (now circular) edge twice around itself.

\begin{uncorollary}
$D$ induces a homeomorphism $\overline{D}%
%TCIMACRO{\TeXButton{TeX field}{\co}}%
%BeginExpansion
\co
%EndExpansion
Q\rightarrow P\subset SO(3)$. Hence $Q$ is homeomorphic to $\mathbb{RP}^{2}$.
\end{uncorollary}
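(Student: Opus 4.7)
The plan is to combine the two preceding theorems with a compactness argument. The two theorems tell us that (i) $D$ is surjective onto $P$, and (ii) the fibers of $D$ coincide exactly with the equivalence classes defining $Q$: all four corners and the three edges adjacent to them collapse to a single point (mapping to $I_3$), the remaining edge wraps twice around the single-twist loop, and no other identifications occur. These are precisely the relations built into the quotient $Q$.

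First I would invoke the universal property of the quotient map $q \co [0,\pi/2]\times[0,2\pi]\to Q$. Because $D$ is constant on each fiber of $q$ (by the description above, combined with the second theorem), there is a unique continuous map $\overline{D}\co Q\to P$ with $D=\overline{D}\circ q$. Surjectivity of $\overline{D}$ is immediate from the first theorem, and injectivity follows because the second theorem says the only coincidences $D(s_1,t_1)=D(s_2,t_2)$ are precisely those already identified in $Q$. Thus $\overline{D}$ is a continuous bijection.

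Next I would upgrade $\overline{D}$ to a homeomorphism via the standard compact-to-Hausdorff principle. The rectangle is compact, so $Q=q([0,\pi/2]\times[0,2\pi])$ is compact. The target $P\subset SO(3)\subset\mathbb{R}^{9}$ is Hausdorff. A continuous bijection from a compact space to a Hausdorff space is automatically closed, hence a homeomorphism. Combining this with the fact already recorded in the definition of $P$ that $P\approx\mathbb{RP}^{2}$ yields $Q\approx\mathbb{RP}^{2}$.

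Honestly, there is no serious obstacle here — the substance has been absorbed by the two theorems already proven. The only thing requiring care is bookkeeping: one must verify that the four boundary edges of the rectangle really are mapped by $D$ in the way the quotient $Q$ prescribes (three edges to $I_3$, and the fourth traversing the single-twist loop twice), so that the equivalence relation induced by $D$ matches that defining $Q$ exactly, not merely up to refinement. Once this matching is checked, the universal property plus compact-to-Hausdorff does the rest.
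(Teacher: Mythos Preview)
Your argument is correct and is exactly the standard unpacking of why this is labeled a \emph{corollary}: the paper offers no separate proof, treating the result as immediate from the two preceding theorems, and your universal-property-plus-compact-Hausdorff argument is precisely the routine verification the authors leave to the reader.
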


\subsection{Hemispherical models for the nullhomotopy}%

%TCIMACRO{\TeXButton{TeX field}{\begin{figure}[htb] \centering\begin{tabular}
%[c]{cc}
%{\parbox[b]{2.1369in}{\begin{center}
%\includegraphics[
%height=2.0928in
%]
%{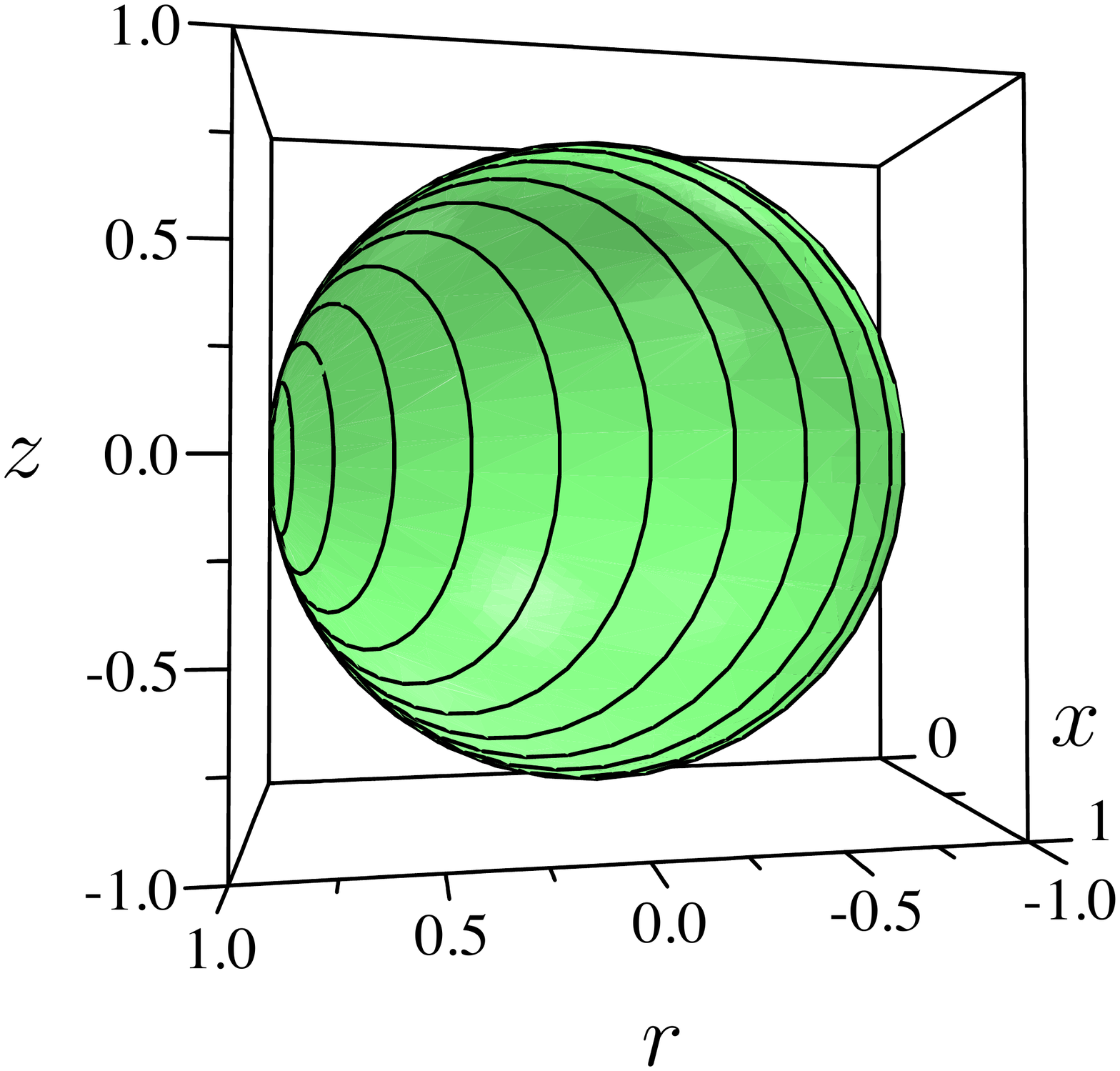}
%\\
%$D$ dragging loops across its lift to the $x$-nonnegative $2$-hemisphere
%\end{center}}}
%&
%{\parbox[b]{2.5244in}{\begin{center}
%\includegraphics[
%height=2.0859in,
%]
%{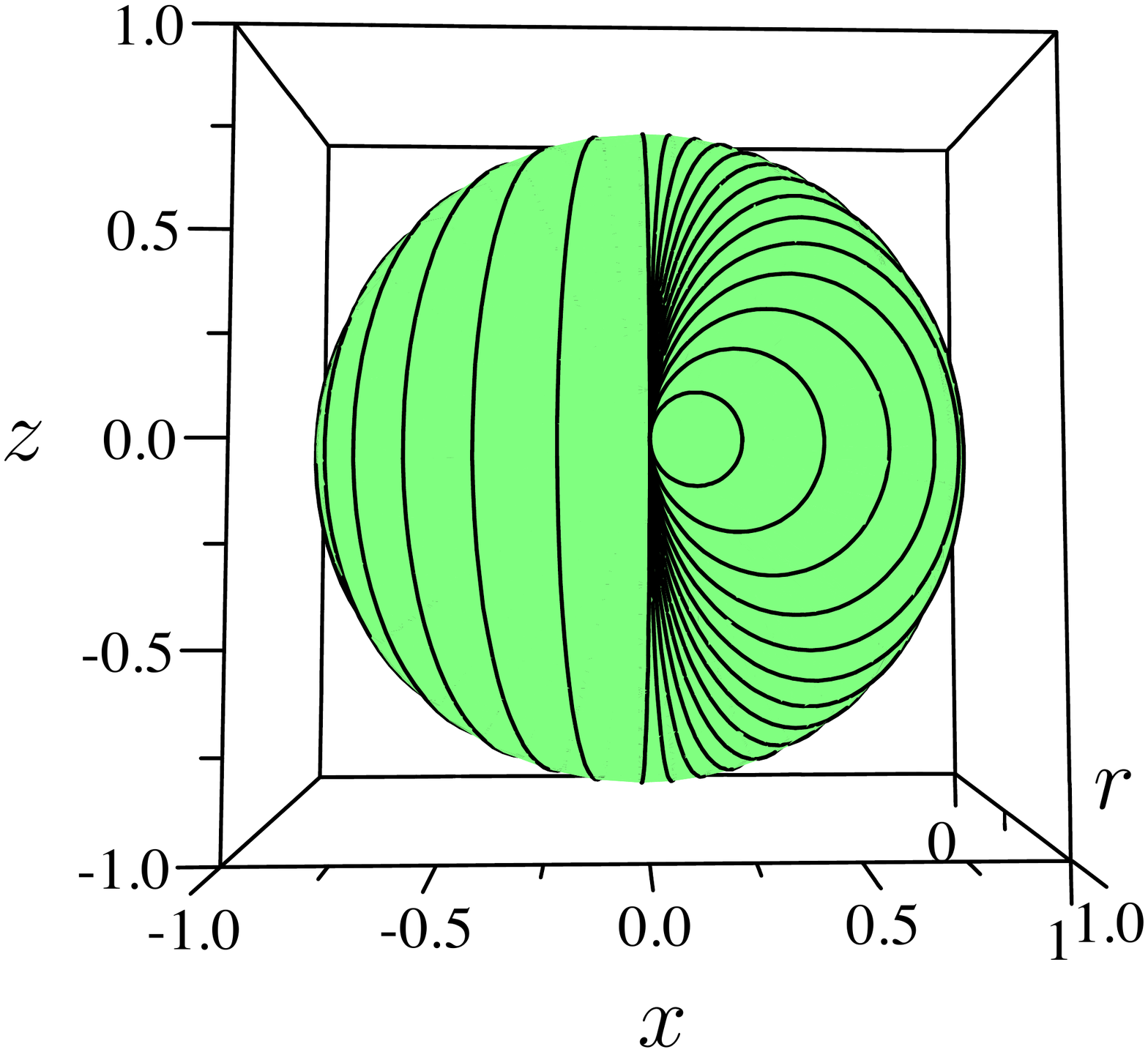}
%\\
%$D$ teasing apart the double-twist in its lift to the $r$-nonnegative
%$2$-hemisphere
%\end{center}}}
%\end{tabular}
%\caption{Two hemispherical views of $D$}\label{hemispheres}
%\end{figure}}}%
%BeginExpansion
\begin{figure}[htb]

 \centering\begin{tabular}
[c]{cc}
{\parbox[b]{2.1369in}{\begin{center}
\includegraphics[
height=2.0928in
]
{dragging-loops-graph.pdf}
\\
$D$ dragging loops across its lift to the $x$-nonnegative $2$-hemisphere
\end{center}}}
&
{\parbox[b]{2.5244in}{\begin{center}
\includegraphics[
height=2.0859in,
]
{teasing-apart-graph.pdf}
\\
$D$ teasing apart the double-twist in its lift to the $r$-nonnegative
$2$-hemisphere
\end{center}}}
\end{tabular}
\caption{Two hemispherical views of $D$}\label{hemispheres}
\end{figure}%
%EndExpansion

It is illustrative to use hemispheres of the $S^{3}$ covering $SO(3)$ to view
from two very different perspectives how the lift $\widehat{D}$ behaves in the
$r$-$x$-$z$ 2-sphere $S_{y=0}^{2}$. Note that the image of $S_{y=0}^{2}$ under
the double-covering $\mathcal{R}$ is the subspace $P$ of $SO\left(  3\right)
$.

First, $\widehat{D}$ lands in the $x$-non-negative hemisphere of $S_{y=0}^{2}%
$, and the covering map $\mathcal{R}%
%TCIMACRO{\TeXButton{TeX field}{\co}}%
%BeginExpansion
\co
%EndExpansion
S_{y=0}^{2}\rightarrow P$ makes the antipodal identification on the $r$-$z$
equator (left plot in Figure \ref{hemispheres}). The single-twist about the
$z$-axis lifts to $\cos\left(  t/2\right)  +\sin\left(  t/2\right)  K$ for
$0\leq t\leq2\pi$, traversing half a circle (so covering space theory shows
that the single-twist cannot be nullhomotopic). The double-twist lifts to
$\cos\left(  t\right)  +\sin\left(  t\right)  K$ for $0\leq t\leq2\pi$, and
$\widehat{D}$ is a nullhomotopy of this great circle loop. In Figure
\ref{hemispheres}, one sees $\widehat{D}$ drag this loop across the
$x$-non-negative hemisphere to the quaternion $1$.

For a second vantage point, and to compare with the tease-apart nullhomotopy
of Figure \ref{ball-model-nullhomotopy}, we can instead view $D$ lifted to the
$r$-non-negative hemisphere in $S_{y=0}^{2}$, with antipodal identification on
the $x$-$z$ equator (right plot in Figure \ref{hemispheres}). The right plot
is obtained from the left plot by rotating our view by $90^{%
%TCIMACRO{\U{b0}}%
%BeginExpansion
{{}^\circ}%
%EndExpansion
}$, and then shifting the $r$-negative image points antipodally to make them
visible. In the right plot, the double-twist runs twice vertically through the
middle, as opposed to the left plot, where the double-twist runs around the
boundary. The right plot is satisfyingly similar to the teasing-apart
nullhomotopy in the disk model of Figure \ref{ball-model-nullhomotopy}.

It is interesting to contemplate how the extremely different plots in Figure
\ref{hemispheres} are just two hemispherical views of the same nullhomotopy in
$P$. A close study of the second plot can match it with the rotation geometry
information in the graphs of $\varphi$ and $\theta$ shown earlier.

\subsection{How does the nullhomotopy move vectors around?}

The theorems above tell us that, except for the double-twist itself, $D$
utilizes each rotation in $P$ exactly once. But how then do the rotations in
$P$ (those with axes in the $x$-$z$ plane) move individual vectors?

As an example, at the very center of the hemisphere depicted on the left in
Figure \ref{hemispheres}, and at the right edge of the hemisphere depicted on
the right, lies the quaternion $I$, so for just a single $\left(  s^{\prime
},t^{\prime}\right)  $ (actually the center of the domain rectangle) the
nullhomotopy $D$ conjugates frame vectors by $I$, resulting in rotation by
$\pi$ around $I$ (recall the remark after the conjugation rotation theorem).
So the standard frame is turned upside down, with $K$ sent to $-K$, $J$ sent
to $-J$, and $I$ fixed. Clearly there is no other way to send $K$ to $-K$
using an axis from the $x$-$z$ plane. So this actually proves that $K$ is sent
to $-K$ exactly once during the entire nullhomotopy $D$, and also illustrates
the reversal phenomenon described in the Introduction. On the other hand, we
see that $J$ is sent to $-J$ infinitely often by $D$, since rotation by $\pi$
around any axis in the $x$-$z$ plane does so.

\section{Where do all the vectors go?}%

%TCIMACRO{\TeXButton{TeX field}{\begin{figure}[htb] \centering\begin{tabular}
%[c]{ll}
%\raisebox{-0.0208in}{\parbox[b]{2.5598in}{\begin{center}
%\includegraphics[
%height=2.565in
%]
%{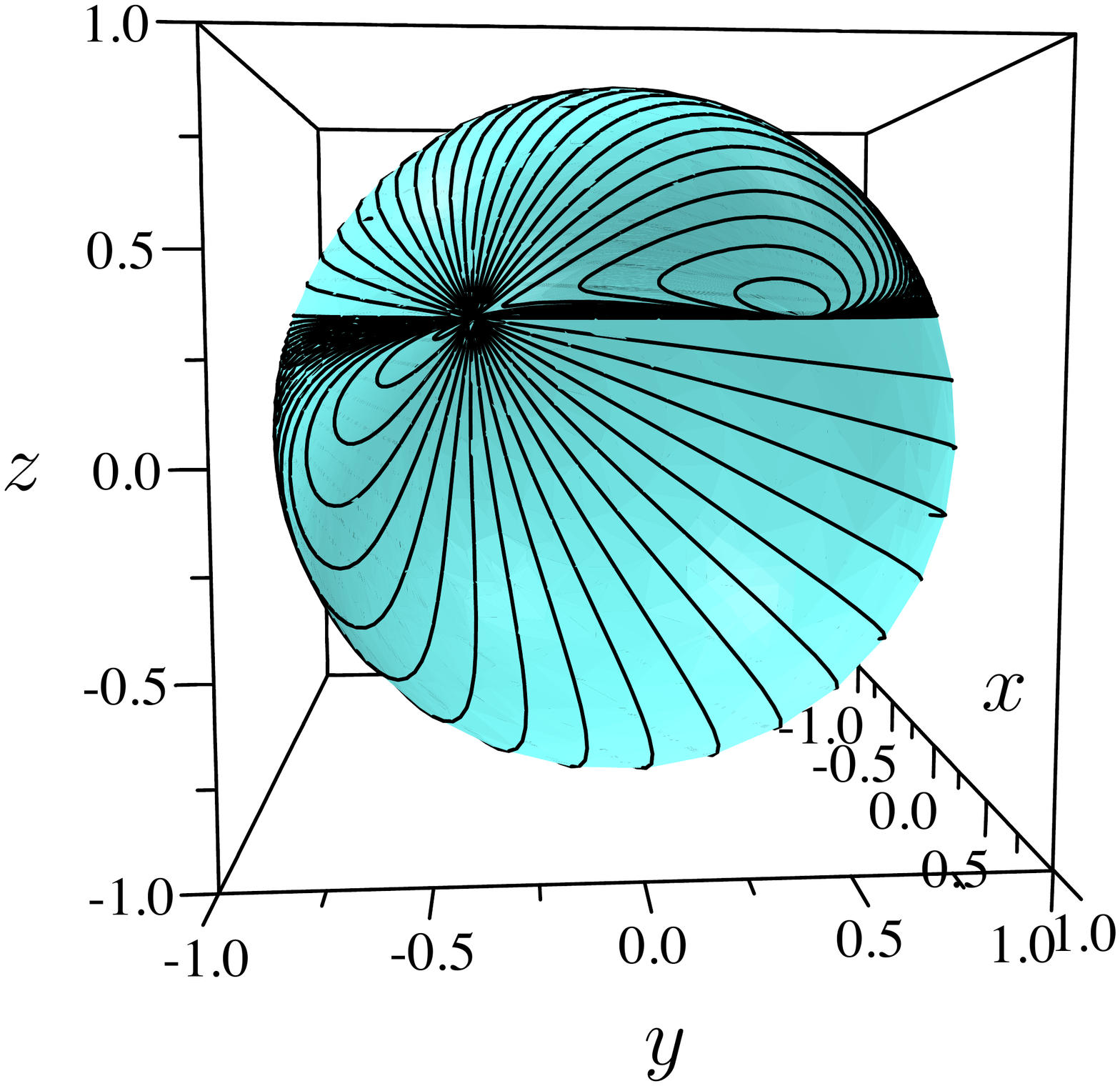}
%\\
%Path loops of $e_{v}\circ D$ in $S^{2}$, for $v=\left(  0.85,0.4,0.34278\right
%)
%$
%\end{center}}}
%&
%{\parbox[b]{2.4091in}{\begin{center}
%\includegraphics[
%height=2.5516in
%]
%{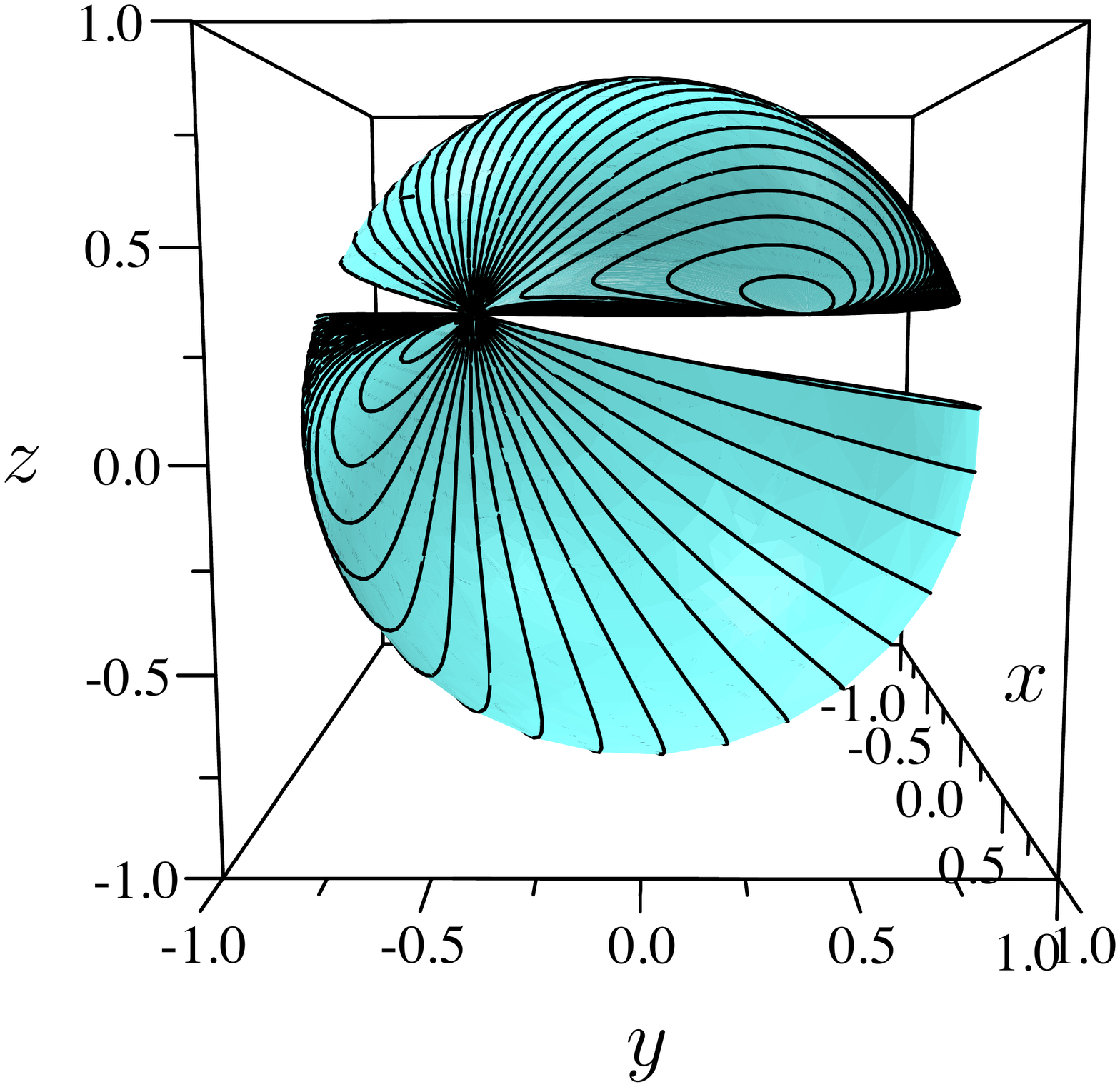}
%\\
%Path loops only for $s\geq0.1$: Clamshell with hinge
%\end{center}}}
%\end{tabular}
%\caption{Evaluating $D$ on a unit vector  in $3$--space}\label{clam-shell}
%\end{figure}}}%
%BeginExpansion
\begin{figure}[htb]

 \centering\begin{tabular}
[c]{ll}
\raisebox{-0.0208in}{\parbox[b]{2.5598in}{\begin{center}
\includegraphics[
height=2.565in
]
{path-loops-eval-in-sphere.pdf}
\\
Path loops of $e_{v}\circ D$ in $S^{2}$, for $v=\left(  0.85,0.4,0.34278\right
)
$
\end{center}}}
&
{\parbox[b]{2.4091in}{\begin{center}
\includegraphics[
height=2.5516in
]
{path-loops-clamshell.pdf}
\\
Path loops only for $s\geq0.1$: Clamshell with hinge
\end{center}}}
\end{tabular}
\caption{Evaluating $D$ on a unit vector  in $3$--space}\label{clam-shell}
\end{figure}%
%EndExpansion

In this section we consider the effect of $D$, and of other nullhomotopies of
the double-twist, on vectors in $S^{2}$. Although we have formulas for $D$,
and even have graphs of its rotation axes and angles, we don't have much sense
yet for how it carries frame vectors around.

We have seen that $D$ uses precisely all the rotations in the subspace $P$ of
$SO(3)$, and uses them most efficiently. We will analyze how rotations from
$P$ act as an ensemble on a vector, and how $D$ is related to other
nullhomotopies. Our conclusions will be visible in the animations we provide
for $D$, and will help guide us in learning how to wave $D$ with our hands.

\subsection{Collapsing $\mathbb{RP}^{2}$ via clamshells}

Given a unit vector $v\in S^{2}$, consider the evaluation map $e_{v}%
%TCIMACRO{\TeXButton{TeX field}{\co}}%
%BeginExpansion
\co
%EndExpansion
SO(3)\rightarrow S^{2}$ defined by applying rotations in $SO(3)$ to $v$. We
are interested in the restriction $e_{v}|_{P}$ of $e_{v}$ to $P$. We can
visualize it by viewing the image in $S^{2}$ of the homotopy rectangle under
$e_{v}\circ D$, and displaying the successive loops in the composition
homotopy $e_{v}\circ D$. We pick a generic $v=\left(  0.85,0.4,0.34278\right)
$, and show the full image on the left side of Figure \ref{clam-shell}. All
loops in $S^{2}$ defined by $t\mapsto e_{v}\circ D\left(  s,t\right)  $ begin
and end at $v$, which lies on the right half of the prominent latitude. When
$s=0$, the double-twist loop carries $v$ twice around this latitude. On the
right side of Figure \ref{clam-shell} we restrict the homotopy parameter $s$
to begin at $.1$ rather than $0$. For $s=.1$, $v$ travels around the back of
the upper rim of the clamshell\footnote{Thanks to Pat Penfield for this
terminology.} to its \emph{hinge} $v^{\prime}$, then around the lower rim, and
finally back through the hinge to $v$, completing its traversal of the upper
rim. Intriguingly, the latitude of $v$ actually intersects the clamshell,
including its rims, only at $v$ and $v^{\prime}$. Most loops resemble $s=.1$,
traversing a \textquotedblleft figure eight\textquotedblright\ on $S^{2}$.
Only nearer the end of the homotopy, when $s$ is large, do the loops fail to
reach $v^{\prime}$, creating the loops in the upper shell near $v$.

Two striking observations stand out. First, the map $e_{v}|_{P}%
%TCIMACRO{\TeXButton{TeX field}{\co}}%
%BeginExpansion
\co
%EndExpansion
P\rightarrow S^{2}$ appears to be surjective. Second, it appears to be mostly
one-to-one, except to the hinge point $v^{\prime}$. We will see that
$v^{\prime}$ is obtained from $v$ by negating its $y$-coordinate.

Of course no continuous map $P\rightarrow S^{2}$ could be surjective and
entirely one-to-one, since $P$ is homeomorphic to $\mathbb{R}\mathbb{P}^{2}$,
not to $S^{2}$. We now show that $e_{v}|_{P}$ collapses $P$ to $S^{2}$
efficiently, by putting all the non-injectivity on to the single vector
$v^{\prime}$.

\begin{untheorem}
The evaluation $e_{v}$ maps $P$ onto $S^{2}$, and is one-to-one on $P$ except
to the vector $v^{\prime}$ obtained from $v$ by negating its $y$-coordinate.
The inverse image of $v^{\prime}$ under $e_{v}%
%TCIMACRO{\TeXButton{TeX field}{\co}}%
%BeginExpansion
\co
%EndExpansion
SO(3) \rightarrow S^{2}$ is an essential embedded circle\footnote{The map
$e_{v}\circ\mathcal{R}%
%TCIMACRO{\TeXButton{TeX field}{\co}}%
%BeginExpansion
\co
%EndExpansion
S^{3}\rightarrow S^{2}$ is a Hopf map, and its circular fibers are the 2-fold
covers of the circular fibers of $e_{v}%
%TCIMACRO{\TeXButton{TeX field}{\co}}%
%BeginExpansion
\co
%EndExpansion
SO(3) \rightarrow S^{2}$. Here we are seeing that exactly one of those fibers
lies in $P$, while the others each intersect it transversally in singletons.}
in $P$.
\end{untheorem}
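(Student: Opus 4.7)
The plan is to analyze each fiber $e_v^{-1}(w) \cap P$ via the elementary observation that if $R \in SO(3)$ has axis $u$ and $Rv = w$, then $u \cdot v = u \cdot w$ (since $Rv - v$ lies in the plane perpendicular to $u$), so $u \perp (v - w)$. Restricting to axes $u = (u_1, 0, u_3)$ in the $x$-$z$ plane, the condition $u_1(v_x - w_x) + u_3(v_z - w_z) = 0$ pins $u$ down as a unique line through the origin, except when $(v_x - w_x, v_z - w_z) = (0, 0)$; combined with $|w| = 1 = |v|$, this degeneracy forces $w \in \{v, v'\}$, exactly recovering the claimed description of $v'$.

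For a nondegenerate $w$, the unique admissible axis $u$ has orbit circle $\{x \in S^2 : u \cdot x = u \cdot v\}$ containing $w$ by construction, so a unique rotation around $u$ carries $v$ to $w$; hence $e_v^{-1}(w) \cap P$ is a single point. For $w = v'$ (with $v_y \neq 0$), every axis in the $x$-$z$ plane admits a unique such rotation, and parameterizing by axis gives an $\mathbb{RP}^1 \cong S^1$ identification of $e_v^{-1}(v') \cap P$ with a circle $C$. For $w = v$ with $v_y \neq 0$, only the identity works, since any nonidentity rotation fixing $v$ has axis $\pm v \notin $ $x$-$z$ plane. The edge case $v_y = 0$ collapses $v$ with $v'$ and directly yields the circle fiber over $v = v'$. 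Additionally, any $R \in SO(3)$ with $Rv = v'$ automatically has axis perpendicular to $v - v'$ (which is parallel to $e_2$), hence in the $x$-$z$ plane, so the entire $SO(3)$-fiber $e_v^{-1}(v')$ coincides with $C$ and lies inside $P$.

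For essentiality, the cleanest route is topological: $e_v|_P$ is a continuous surjection from compact $P \cong \mathbb{RP}^2$ onto Hausdorff $S^2$ that collapses $C$ to the point $v'$ and is otherwise injective, so it factors through a homeomorphism $P/C \cong S^2$. Because collapsing a two-sided (nullhomotopic) embedded circle in $\mathbb{RP}^2$ yields $S^2 \vee \mathbb{RP}^2$ rather than $S^2$, $C$ must be one-sided, hence essential. Alternatively, and more in keeping with the paper's quaternionic spirit, one can lift $C$ to $S^3$ via $\mathcal{R}$; a short computation shows this Hopf fiber lies entirely in the hyperplane $y = 0$, making it a great circle of $S_{y=0}^2 \subset S^3$ whose image in $P$ is a projective line. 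The principal obstacle I expect is book-keeping the degenerate case $v_y = 0$ uniformly throughout the case analysis; the geometric content itself is routine.
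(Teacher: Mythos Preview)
Your argument for surjectivity, injectivity, and the circle structure of the fiber over $v'$ is essentially the paper's argument in linear-algebraic dress: the paper phrases the key constraint as ``the axis lies on the perpendicular bisector of the great arc from $v$ to $w$,'' which is exactly your condition $u \perp (v-w)$, and the case analysis (generic $w$, $w=v$, $w=v'$, with the degenerate subcase $v_y=0$) matches the paper's. Your additional observation that the \emph{entire} $SO(3)$-fiber over $v'$ already lies in $P$ is also present in the paper (``the axes of the rotations carrying $v$ to $v'$ all lie on $E$''), obtained the same way.

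Where you genuinely diverge is in proving essentiality of $C$. The paper argues directly via covering spaces: the lift of $C$ to the double cover $S_{y=0}^{2}$ of $P$ is a path joining antipodal points rather than a closed loop, so $C$ represents the nontrivial element of $\pi_1(\mathbb{RP}^2)$. Your primary route is instead a quotient-space argument: since $e_v|_P$ induces a homeomorphism $P/C \cong S^2$, and collapsing a separating (nullhomotopic) circle in $\mathbb{RP}^2$ would yield $S^2 \vee \mathbb{RP}^2$ rather than $S^2$, the circle must be one-sided. This is a nice alternative that avoids explicitly tracking the lift, at the cost of invoking the classification of embedded circles in $\mathbb{RP}^2$; the paper's covering-space argument is slightly more direct but requires knowing the lift concretely. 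Your sketched second route (the Hopf fiber is a great circle in $S_{y=0}^2$, hence a projective line in $P$) is correct and is closer to the paper's approach.
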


\begin{proof}
First we prove the onto and one-to-one claims. For an arbitrary $w\in S^{2}$,
consider the rotations carrying $v$ to $w$. If $w=v$, then the identity, which
is in $P$, maps $v$ to $w$, and the only other rotations to do so are
nonidentity rotations around $v$. These are only in $P$ if $v$ lies on the
$x$-$z$ equator, in which case $w=v=v^{\prime}$, and there is no claim about
injectivity. If $w\neq v$, then the axes of the rotations carrying $v$ to $w$
lie on the perpendicular bisector of the great arc(s) joining $v$ and $w$. If
this perpendicular bisector (a great circle) is different from the $x$-$z$
equator, then it intersects the $x$-$z$ equator in two antipodal points,
yielding exactly one rotation in $P$ carrying $v$ to $w$. If on the other hand
this perpendicular bisector is the $x$-$z$ equator, then $w$, being the
reflection of $v$ across the perpendicular bisector, is $v^{\prime}$, and
there is again no claim about injectivity.

Now we consider the inverse image of $v^{\prime}$ under $e_{v}$. If
$v^{\prime}=v$, then $v$ is on the $x$-$z$ equator, and the inverse image of
$v^{\prime}$ consists of the rotations around $v$ itself, a circle in $P$. If
on the other hand $v^{\prime}\neq v$, then we consider the great arc $A$
joining $v$ to $v^{\prime}$, and by reflecting in $y$ we see that $A$ has the
$x$-$z$ equator $E$ as perpendicular bisector. The axes of the rotations
carrying $v$ to $v^{\prime}$ all lie on $E$. Where $A$ intersects $E$, a
rotation by $\pi$ will carry $v$ to $v^{\prime}$, and for every other axis
along $E$ there is one rotation carrying $v$ to $v^{\prime}$, with rotation
angle varying continuously with the axis. In the one case where $v$ and
$v^{\prime}$ are antipodal ($v=J$) the rotation angle remains as $\pi$
throughout. Otherwise the rotation angle varies monotonically to an extreme
when the rotation axis is perpendicular to the plane containing $A$, then
returns to $\pi$ as the axis completes one full flip. In all cases the inverse
image is homeomorphic to a circle in $P$, and it corresponds to a
homotopically essential loop, since its lift to the universal cover $S^{2}$
joins antipodal points\footnote{A similar geometric argument shows that for
all $w$ in $S^{2}$, the set $C(v,w)$ of transformations in $SO(3)$ carrying
$v$ to $w$ is an essential circle. This can also be seen group-theoretically,
since this set of transformations is a coset of the subgroup $C$ fixing $v$,
and $C$ is just the loop of rotations with axis $v$. From this perspective, a
computation in the quaternions shows that the loop $C(v,w)$ is transverse to
$P$ when $w\neq v^{\prime}$.}.
\end{proof}

\subsection{Vectors must fly every-which-way}

$\label{EWW}$

Recall (Section \ref{projective plane}) that $D$ factors through a
homeomorphism $\overline{D}%
%TCIMACRO{\TeXButton{TeX field}{\co}}%
%BeginExpansion
\co
%EndExpansion
Q\rightarrow P$. Since $P$ is diffeomorphic to $\mathbb{RP}^{2}$, we may
consider $Q$ with the differentiable structure making $\overline{D}$ a
diffeomorphism. Of course $e_{v}$ is also smooth.

\begin{uncorollary}
Both $e_{v}|_{P}$ and $e_{v}\circ\overline{D}$ have nonzero mod two degree as
smooth maps between differentiable $2$-manifolds.
\end{uncorollary}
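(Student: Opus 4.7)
The plan is to invoke the preceding theorem together with Sard's theorem to produce a regular value with exactly one preimage, from which mod two degree $1$ follows directly.

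First I would note that $P$ (hence $Q$, with the transported smooth structure) is a compact, connected smooth $2$-manifold without boundary, as are both copies of $S^2$; so the mod two degree is defined for both maps in question. The evaluation map $e_v \co SO(3) \to S^2$ is smooth, so its restrictions are smooth. Thus the standard mod two degree theory of Milnor applies.

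Next I would apply Sard's theorem to $e_v|_P \co P \to S^2$: the set of its critical values has measure zero in $S^2$, so in particular there exist regular values $w \in S^2$ with $w \neq v'$. By the previous theorem, every such $w$ has exactly one preimage point under $e_v|_P$. Since $w$ is a regular value, the differential of $e_v|_P$ at this preimage is an isomorphism, and the cardinality of the preimage is $1$. Hence the mod two degree of $e_v|_P$ equals $1$, which is nonzero.

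For $e_v \circ \overline{D} \co Q \to S^2$, I would use multiplicativity of mod two degree under composition: since $\overline{D} \co Q \to P$ is a diffeomorphism by construction, it has mod two degree $1$, so
\[
\deg_2(e_v \circ \overline{D}) = \deg_2(e_v|_P) \cdot \deg_2(\overline{D}) = 1 \cdot 1 = 1.
\]
Equivalently, a regular value $w \neq v'$ of $e_v|_P$ pulls back via $\overline{D}^{-1}$ to a single regular preimage in $Q$.

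There is no real obstacle here: the previous theorem has already done the geometric work by identifying the preimage structure of $e_v|_P$, and Sard's theorem supplies the one missing analytic ingredient (the existence of a regular value distinct from $v'$). The only minor subtlety is verifying that $P$ and $Q$ are genuinely smooth closed $2$-manifolds so that classical mod two degree applies, which follows from $P \cong \mathbb{RP}^2$ and the transfer of smooth structure along $\overline{D}$.
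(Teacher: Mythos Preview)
Your proposal is correct and follows essentially the same approach as the paper: apply Sard's theorem to obtain a regular value $w\neq v'$, invoke the preceding theorem to see that its preimage is a singleton, and conclude that the mod two degree is $1$. Your additional remarks on why the manifolds are closed and smooth, and on handling $e_v\circ\overline{D}$ via the diffeomorphism $\overline{D}$, are elaborations the paper leaves implicit, but the core argument is identical.
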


\begin{proof}
Mod two degree may be computed as the mod two cardinality of the inverse image
of any regular value \cite{hirsch}. Regular values are dense (Sard's Theorem),
so we may choose a regular value $w\neq v^{\prime}$. From the theorem above,
the inverse image of $w$ is a singleton.
\end{proof}

The next result relates arbitrary based nullhomotopies to $D$.

\begin{untheorem}
Any two based nullhomotopies of the double-twist in $SO(3)$ are homotopic
through based nullhomotopies of the double-twist.
\end{untheorem}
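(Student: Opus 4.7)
First I would rephrase the data. A based nullhomotopy $H\co [0,1]\times [0,2\pi]\to SO(3)$ sends three sides of the rectangle to $I_3$ and the remaining side along the double-twist $\gamma$. Collapsing the three identity-sides to a single basepoint identifies the rectangle with a disk $D^2$ and converts $H$ into a map $f\co (D^2,\partial D^2)\to (SO(3),I_3)$ whose restriction to $\partial D^2\cong S^1$ is the loop $\gamma$. Under this dictionary, a homotopy through based nullhomotopies $H_0 \simeq H_1$ is precisely a homotopy $f_0 \simeq f_1$ relative to $\partial D^2$, so the theorem reduces to the claim that any two extensions of $\gamma$ over $D^2$ are homotopic rel boundary.

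Given two such extensions $f_0,f_1$, I would glue them along their common boundary --- placing $f_0$ on the southern hemisphere and $f_1$ on the northern hemisphere of $S^2$ --- to obtain a single map $g\co S^2\to SO(3)$. A standard obstruction-theoretic argument then shows that $f_0$ and $f_1$ are homotopic rel $\partial D^2$ if and only if $[g]=0$ in $\pi_2(SO(3))$: from a nullhomotopy of $g$, parametrize the swept-out $3$-ball as $[0,1]\times D^2$ and read off a homotopy from $f_0$ to $f_1$ fixing $\gamma$ on the boundary; conversely, such a homotopy, viewed on $[0,1]\times D^2$, descends to a nullhomotopy of $g$ after collapsing the tubular boundary $[0,1]\times\partial D^2$ to a point.

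The crux, and the step I expect to carry the most content, is the vanishing of $\pi_2(SO(3))$. The quaternionic double cover $\mathcal{R}\co S^3 \to SO(3)$ constructed earlier in the paper finishes the job: because $\mathcal{R}$ is a covering projection it induces an isomorphism $\pi_2(S^3)\cong\pi_2(SO(3))$, and $\pi_2(S^3)=0$ since $S^3$ is $2$-connected (for example, by the Hurewicz theorem applied to the simply-connected $3$-sphere, or by the CW structure on $S^3$ with no $1$- or $2$-cells). Hence every such $g$ is nullhomotopic, every pair of based nullhomotopies of $\gamma$ is joined by a homotopy through based nullhomotopies, and the theorem follows. Equivalently, in the language developed in the paper, one can lift both $H_0$ and $H_1$ to maps into $S^3$, observe that the lifts automatically share the same boundary behavior (the three sides lift to $1$ and the fourth to the canonical loop $\cos t + K\sin t$), and carry out the gluing and filling upstairs in $S^3$ where the obstruction manifestly vanishes.
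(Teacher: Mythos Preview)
Your proof is correct and follows essentially the same route as the paper: both arguments assemble the two nullhomotopies into a map $S^2\to SO(3)$ (you by collapsing three sides to get disks and gluing along their common boundary $\gamma$, the paper by placing the two homotopies on opposite faces of a solid box), then fill it in using $\pi_2(SO(3))=0$, established via the quaternionic double cover $S^3\to SO(3)$. One minor notational slip: writing $f\co (D^2,\partial D^2)\to (SO(3),I_3)$ would force $\partial D^2$ to land at $I_3$, contradicting your stated $f|_{\partial D^2}=\gamma$; you mean a based map of disks whose boundary restriction is $\gamma$, and with that reading the argument goes through.
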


\begin{proof}
Construct a map from the boundary of a box to $SO(3)$, having the two given
nullhomotopies on opposite faces, and the boundary requirements of based
nullhomotopies of the double-twist on the edges of the slices of the box
parallel to these faces. Since the boundary of the box is homeomorphic to
$S^{2}$, this map from the boundary to $SO(3)$ can be filled in continuously
on the interior of the box provided the second homotopy group of $SO(3)$ is
trivial. It is, since $S^{3}$ is a double cover of $SO(3)$, and $S^{3}$ has
trivial second homotopy group. Now the maps on the slices of the filled-in
box, parallel to the two opposite faces, provide the desired homotopy between
the two original nullhomotopies.
\end{proof}

We now prove that every based nullhomotopy of the double-twist must send every
vector in every direction to achieve the untangling.

\begin{EWW}
Let $H$ be a based nullhomotopy of the double-twist in $SO\left(  3\right)  $.
For all vectors $v,w\in S^{2}$, there exist $s$ and $t$ such that
$H(s,t)(v)=w$.
\end{EWW}

\begin{proof}
The slices of a homotopy between $H$ and $D$, as given by the theorem above,
create a homotopy between $\overline{H}$ and $\overline{D}$ as maps from $Q$
to $SO(3)$. Then $e_{v}\circ\overline{H}%
%TCIMACRO{\TeXButton{TeX field}{\co}}%
%BeginExpansion
\co
%EndExpansion
Q\rightarrow S^{2}$ is homotopic to $e_{v}\circ\overline{D}$, so from the
corollary above, $e_{v}\circ\overline{H}$ has nonzero mod two degree as a
continuous map between differentiable $2$-manifolds (because
$\operatorname{mod}2$ degree is a homotopy invariant \cite{hirsch}). Thus
$e_{v}\circ\overline{H}$ must be onto, as non-surjective maps have degree
zero. Since $e_{v}\circ\overline{H}(s,t)=H(s,t)(v)$, this completes the proof.
\end{proof}

\begin{unremark}
With a bit more work, one can actually show that this theorem holds true for
arbitrary nullhomotopies of the double-twist, based or not. The key point is
that the boundary requirements of an unbased nullhomotopy still provide a
factorization through $\mathbb{RP}^{2}$ (using a somewhat different model from
$Q$). We leave the details as an exercise for the interested reader.
\end{unremark}

\section{Visualizing the double-tipping nullhomotopy}

We can visualize a based loop in $SO\left(  3\right)  $ as a movie in which an
object embedded in space rotates, returning to its original embedding. A based
nullhomotopy of the double-twist is then a continuous family of movies
beginning with a movie in which an object rotates twice about a fixed axis,
and ending with a movie in which the object never moves. We know such a family
of movies is impossible starting with a single rotation of an object. But for
a double rotation, our nullhomotopy $D$ will provide such a family of movies.
Of course there are infinitely many movies in this family, one for each value
of the homotopy parameter $s$. However, we will find that it is sufficient to
view these movies for just $8$ increments of $s$.

There are three ways the reader may view movies of $D$:

\begin{itemize}
\item Movies for the $8$ increments are displayed in synchrony in the video
\texttt{doubletip-nullhomotopy.mp4} from our supplementary materials (also at
\cite{animations}).

\item The program in the folder \texttt{Rotations2.0} in our supplementary
materials (also at \cite{animations,MEGLab}) was created by students in the
George Mason University Experimental Geometry Lab (MEGL) \cite{MEGLab}. We are
grateful to them for this wonderful creation. It provides the user with
complete control of the parameters $s$ and $t$ of a movie of $D$ in real time.
This allows the viewer to explore many features of $D$ related to our
discussions here. Begin with their \texttt{README2.0} file, which explains how
to control not only these parameters, but also other options discussed below.

\item Here in this article we display a grid of frozen images from the movies.
\end{itemize}

\renewcommand{\arraystretch}{0}\setlength{\tabcolsep}{.03in}%

%TCIMACRO{\TeXButton{B}{\begin{table}[htbp] \centering}}%
%BeginExpansion
\begin{table}[htbp] \centering
%EndExpansion%
% [inline block 0: 2 envs, 50123 chars -> data_tex | \begin{tabular} [c]{ccccc}%...]
%
%TCIMACRO{\TeXButton{E}{\end{table}}}%
%BeginExpansion
\end{table}%
%EndExpansion

The grid of frozen images uses eight increments for each of $s$ and $t$.
Moving down in the grid increases $t$, so each column displays images from the
movie that corresponds to a particular value of $s$. Moving rightwards
increases $s$, so the columns begin with the double rotation movie and end
with the stationary movie. The grid is split across two pages, with the column
for $s=\pi/4$, which lies in the middle of the nullhomotopy, repeated so as to
show on both pages.

Each movie displays a twirling \textquotedblleft right hand\textquotedblright,
chosen for its asymmetry, so that the hand's embedding in space corresponds to
a unique rotation. The hand has a long red finger(s), a blue thumb, connective
tissue between them, and an orange candle with burning yellow flame, attached
securely\footnote{As we have learned, the candle will turn upside down at some
point during the homotopy!} to the palm, in the spirit of the candle dance.
Every movie begins and ends at the same default embedding of the hand, in
which the fingers point directly away from the viewer (to $-I$), the candle
upright (to $K$), and the thumb to the right (to $J$). At $s=0$ we can watch
the hand rotate twice keeping the candle upright, tracing out the double
rotation around the vertical axis. As $s$ progresses the movies slowly
trivialize. The movies also show a smaller unmoving hand, in the default
embedding, for fixed visual reference. A moving green ray displays the axis of
rotation $\varphi$, and a green arc displays the rotation angle amount
$\theta$, reflecting the graphs of Figure \ref{phi-theta-graphs}.

Below are some features to look for in the movies, echoing results above,
which will be relevant in the next section when we teach you how to perform
the movies with your own body. These beautiful and fascinating features are
best seen using the MEGL program mentioned above, which has an optional
contrail feature, allowing one to view moving contrails that follow
individually the paths traversed by either the fingers, thumb, or candle, each
of which exhibits dramatically different behavior.

\begin{itemize}
\item The fingers should point directly away (to $-I$) often (since $-I$ is
its own hinge) and in every other direction exactly once. To achieve this the
contrail path followed by the fingers starts as a double cover of the equator,
which unfolds to a figure eight. The progressing figure eights, which always
self-intersect at $-I$, eventually shrink to nothing.

\item The thumb should point in every direction other than $J$ exactly once,
except to its antipodal point, where it should point often, because its
antipodal point $-J$ is its hinge. In fact, from Figure \ref{phi-theta-graphs}
we can see that for $s<\pi/4$, the thumb points to $-J$ twice during the
movie, then for $s=\pi/4$ it happens only once (with the thumb barely grazing
$-J$), right in the middle of the movie, and thereafter not at all. The viewer
may verify this in the frozen images and more easily in the animations. The
reader may even wish to verify from the graphs in Figure
\ref{phi-theta-graphs} that $s=\pi/4$ is the last movie in which any vector is
rotated to its antipodal point. To achieve this behavior, the contrail path
followed by the thumb again starts as a double cover of the equator and
unfolds to a figure eight. One loop of the progressing figure eights, which
always self-intersect at $-J$, contracts to form a cusp, and subsequently the
remaining loop shrinks to nothing at $J$.

\item The candle must point in every non-upright direction exactly once during
the interior of the homotopy (since $K$ is its own hinge); for instance, by
referring to the frozen image grid, you can pinpoint exactly when the candle
is upside down. To achieve this overall behavior, the contrail path followed
by the candle starts as a tiny cardiod, which wraps around the sphere as the
nullhomotopy progresses, eventually contracting to a cusp through
thin-petioled leaves. The cardiod-like behavior can be deduced from the graphs
in Figure \ref{phi-theta-graphs}.
\end{itemize}

Since the movies interpolate between a double rotation movie and an entirely
stationary movie, one could hope that each successive movie is a
\textquotedblleft relaxed\textquotedblright\ version of the one before. This
is nicely seen by viewing the computer animations. The viewer must mentally
interpolate the intermediate movies connecting our chosen values of $s$, since
to show everything would be an animation of animations, requiring one more
time coordinate than we currently have on hand, so to speak. Once you are
convinced that each animation looks like a relaxed\ version of the previous
one, then the mental homotopy interpolation is a success. This will prepare
you to perform the entire phenomenon with your body.

\section{You can \textquotedblleft wave\textquotedblright\ the untangling
yourself!}

In this section, we will teach you how to perform the motions depicted in our
movies with your hand. We refer to the hand motions for each movie as a
\emph{wave}. One might worry that our arms are not sufficiently flexible to
control the required hand motions, but the most physically demanding wave is
the first one in the homotopy (namely a double rotation of the hand about the
vertical) and this is already performed by the candle dance.

Recall that when we first described the candle dance, we observed that the
movement of the arm contains all the information for an untangling, but that
information is hidden in the intricate motions of our wrist, elbow, and
shoulder joints, and seems inaccessible. Our goal is to display the untangling
entirely with hand motions, as shown in our animations, with the arm now
serving only to control the hand. Thus the candle dance serves only to
demonstrate the first movie in our animations of $D$ ($s=0$). We should hope
that since the nullhomotopy relaxes a double rotation to no rotation at all,
the subsequent movies will become easier to perform.

At this point the reader should try performing the sequence of movies
appearing in our animations \texttt{doubletip-nullhomotopy.mp4} or in the
displayed grid of frozen images. Our best advice for learning how to wave the
movies is to pay close attention to the embedding of your hand halfway through
each movie. This should allow the rest of each movie to flow naturally. In the
video \texttt{waving.mp4} from our supplementary materials (also at
\cite{animations}) we lead you through this learning process, or you may
consult the following paragraph. And in the video
\texttt{simultaneous-waving.mp4} we demonstrate all the waving movies in synchrony.

For instance, consider the second movie in our series, immediately following
the double rotation. If you perform the double rotation (candle dance) without
sufficient limberness of arm, you will probably not manage to keep your palm
perfectly horizontal, especially in the middle of the movie, when this
requires the greatest contortion. Our second movie displays just such a
relaxed candle dance: The hand falls an eighth of a turn short of horizontal
in the middle of the movie. Our advice, then, is to use the fact that in the
middle of each wave, your fingers should point directly away from you, and the
embeddings of your hand at the middle of the successive movies should rotate
once clockwise (from your point of view) around the axis pointing directly
toward you. That is, in the $i$-th wave ($0\leq i\leq8$), aim for your hand to
be positioned, halfway through the wave, with the fingers pointing away and
your hand rotated $i/8$ of a turn clockwise around the axis pointing directly
toward you (as measured from its initial palm-up position with thumb pointing
to the right). This rotational positioning of the hand halfway through the
waves can be seen by looking across the middle row of the frozen images grid
(and can be gleaned from the graphs in Figure \ref{phi-theta-graphs}).

To perfect your motions, match them with the behavior of $-I$, $J$, $K$ (the
reference positions of your fingers, thumb, and imaginary candle) discussed in
the previous section. Good luck learning how to wave a nullhomotopy of the
double-twist, and have fun showing it to others!

\bigskip

\noindent\textbf{ACKNOWLEDGEMENTS.} We thank Ted Stanford for stimulating our
start on this journey, Bill Julian for suggesting the quaternionic viewpoint
on rotations, John MacKendrick and George Pearson of MacKichan Software for
help with animations using Scientific Workplace, and Marty Flashman for
suggesting a simultaneous waving video. We thank Robert Lipshitz for designing
a Macintosh app displaying our animations. And we thank Sean Lawton and the
students at the George Mason University Experimental Geometry Lab for their
animations of our nullhomotopy \cite{MEGLab}.

\end{document}